


 \documentclass[final,3p,times]{elsarticle}


\usepackage{amssymb}
\usepackage{amsmath, amsthm, amsfonts,ifthen}
\usepackage[english]{babel}
\usepackage{color}
\usepackage{dsfont}
\newtheorem{theorem}{Theorem}

\newtheorem{definition}[theorem]{Definition}

\newtheorem{lemma}[theorem]{Lemma}

\newtheorem{proposition}[theorem]{Proposition}
\newtheorem{remark}[theorem]{Remark}

\numberwithin{equation}{section}



\begin{document}

\begin{frontmatter}



\title{Penalization method for reflected BDSDEs with two-sided jumps and driven by L\'{e}vy process}

 \author{Mohamed Marzougue}
 \address{Laboratory of Analysis and Applied Mathematics (LAMA), faculty of sciences Agadir, Ibn Zohr University,
Agadir, Morocco.\\mohamed.marzougue@edu.uiz.ac.ma}

\begin{abstract}
In this paper, we prove the existence and uniqueness of the solution to reflected backward doubly stochastic differential equations driven by Teugels martingales associated with a L\'{e}vy process where the barrier process is not necessarily right continuous by approximating such equations by a new version of penalization method.
\end{abstract}

\begin{keyword}
Reflected backward doubly stochastic differential equations \sep L\'{e}vy process \sep irregular barrier \sep penalization method

60G20 \sep 60H05 \sep 60H15
\end{keyword}

\end{frontmatter}


\section{Introduction}

After Pardoux and Peng \cite{PP1} have introduced the notion of non-linear backward stochastic differential equations (BSDEs in short) in a Brownian setting, the authors introduced a new kind of BSDEs in \cite{PP2}, that is a class of backward doubly stochastic differential equations (BDSDEs in short) with two different directions of stochastic integrals with respect to two independent Brownian motions. Precisely, these equations take the form
\begin{equation}\label{bdsde}
Y_{t}=\xi +\int_{t}^{T} f(s,Y_{s},Z_{s})ds+\int_{t}^{T} g(s,Y_{s},Z_{s})dB_s-\int_{t}^{T}Z_{s}dW_{s},\quad 0\leq t\leq T.
\end{equation}
where the integral driven by $dW_s$ is the standard forward stochastic It\^{o}'s integral and the integral driven by $dB_s$ is the backward stochastic Kunita-It\^{o}'s integral. The authors established the existence and uniqueness of solution to BDSDE (\ref{bdsde}) under the square integrability on the data $(\xi,f,g)$ with Lipschitzian drivers $f$ and $g$. As application, the authors gave a probabilistic representation for a class of quasi-linear stochastic partial differential equations. Following this, Ren et al. \cite{RLH} have considered BDSDEs driven by Teugels martingales associated with a L\'{e}vy process satisfying some moment condition and an independent Brownian motion. Later, there are several works which have focused on developing the theory of BDSDEs in different direction (see for instance \cite{Aman3,fa1,fa2,owo1,owo2,owo3,sow,sagna}).

As a variation of BDSDEs, Bahlali et al. \cite{bahlali} were introduced reflected BDSDEs (RBDSDEs in short) where an additional nondecreasing process $K$ is added to the standard BDSDEs (\ref{bdsde}) in order to keep the $Y$-component of the solution above a certain lower continuous process, called barrier (or obstacle), and to do so in a minimal fashion. The authors have studied the case when the coefficient $f$ is continuous, and they proved the existence of a minimal and maximal solutions in a Brownian setting. Following this, Ren \cite{Ren} has considered RBDSDEs driven by Teugels martingales associated with a L\'{e}vy process, in which the barrier process is right continuous with left limits. We stress that the theory of RBDSDEs has been extended to the case where the barrier is not necessarily continuous and/or a larger filtration than the Brownian filtration, or by weakening the assumptions on the coefficients, by several authors, we quote \cite{Aman1,Aman2,Aman4,Hu,LL,Ren} and references therein. In all of the mentioned works, the barrier has been assumed to be at least right continuous. Recently, Berrhazi et al. \cite{berhazi} considered RBDSDEs when the barrier is not necessarily right-continuous by inspiring on the work of Grigorova et al. \cite{GIOOQ} which is the first one dealing with right upper semi-continuous barrier reflected BSDEs. For more developments on Reflected BSDEs when the barrier is not necessarily right-continuous, we refer to \cite{Akdim,BO,GIOQ,KRS,Ma1,ME:2019,ME:2020}. More recently, Marzougue and Sagna \cite{MS} extended the work of Berrhazi et al. \cite{berhazi} to the case when the noise is driven also by an independent Poisson random measure under the so-called stochastic Lipschitz condition on the drivers.

The penalization method is one of useful tools to establish the existence result for RBSDEs (e.g., \cite{Ess,HO,LX}) and RBDSDEs (e.g., \cite{Aman4}). Very recently, Marzougue \cite{Ma2} has proved a new monotonic limit theorem for regulated processes (in which the trajectories have just left and right limits), from that proved by Peng \cite{Peng}. As application, the author has established the existence theorem of RBSDEs when the barrier has regulated trajectories (not necessarily right-continuous) by means of penalization method.

Motivated by the Marzougue's monotonic limit theorem, we can now approximate the solution of RBDSDEs with regulated barrier by a modified penalized schema. To this end, we mainly consider, in this paper, the problem of RBDSDEs driven by Teugels martingales associated with a L\'{e}vy process (RBDSDEL in short), and we prove the existence and uniqueness of the solution of such equations by means of penalization method.

The paper is organized as follows: In section \ref{s1} we give some notations and preliminaries, and we formulate our problem. Section \ref{s2} is devoted to establish the well-posedness of our problem which corresponds to a class of RBDSDEL where the barrier process is not necessarily right-continuous by means of penalization method.

\section{Background}\label{s1}
\subsection{Preliminaries and notations}

Let $T$ strictly positive real number and let $(\Omega,\mathcal{F},\mathbb{P},(\mathcal{F}_{t})_{t\leq T},(B_{t})_{t\leq T},(\mathfrak{L}_{t})_{t\leq T})$ be a complete Brownian-L\'{e}vy space in $\mathbb{R}\times\mathbb{R}^\ast$, with L\'{e}vy measure $\nu$, i.e. $(\Omega,\mathcal{F},\mathbb{P})$ is a complete probability space, $(B_{t})_{t\leq T}$ is a standard Brownian motion in $\mathbb{R}$ and $(\mathfrak{L}_{t})_{t\leq T}$ is a $\mathbb{R}^\ast$-valued pure-jump L\'{e}vy process independent of $(B_{t})_{t\leq T}$, which corresponds to a standard L\'{e}vy measure $\nu$ satisfying the following conditions:
\begin{itemize}
\item[$(i)$] $\int_\mathbb{R}(1\wedge x^2)\nu(dx)<+\infty,$
\item[$(ii)$] $\int_{(-\varepsilon ,\varepsilon )^{c}}e^{\lambda |x|}\nu(dx)<+\infty$ for each $\varepsilon >0$ and some $\lambda>0$.
\end{itemize}
For each $t\leq T$, we define
$$\mathcal{F}_t\triangleq\mathcal{F}_{t,T}^B\vee\mathcal{F}_{0,t}^L,$$
where for any process $(\eta_t)_{t\leq T}$; $\mathcal{F}_{s,t}^\eta= \sigma\{\eta_r-\eta_s;\; s\leq r\leq t\}\vee\mathcal{N}$ and $\mathcal{N}$ denotes the class of $\mathbb{P}$-null sets of $\mathcal{F}$.
Note that the collection $(\mathcal{F}_{t})_{t\leq T}$ is neither increasing nor decreasing, so it does not constitute a filtration. However, $(\mathcal{G}_{t})_{t\leq T}$ defined as $\mathcal{G}_t\triangleq\mathcal{F}_{T}^B\vee\mathcal{F}_{t}^L$ is a filtration which contains $(\mathcal{F}_{t})_{t\leq T}$.

We denote by $\mathfrak{L}_{t-}=\lim_{s\nearrow t}\mathfrak{L}_{s}$ and $\Delta \mathfrak{L}_{t}=\mathfrak{L}_{t}-\mathfrak{L}_{t-}$. The power-jumps of the L\'{e}vy process $\mathfrak{L}$ are defined by $\mathfrak{L}_{t}^{(1)}=\mathfrak{L}_{t}\quad \mbox{and}\quad \mathfrak{L}_{t}^{(i)}=\sum_{0< s\leq t}(\Delta \mathfrak{L}_{s})^{i},\ i\geq 2$. Define $Y_{t}^{(i)}=\mathfrak{L}_{t}^{(i)}-\mathbb{E}[\mathfrak{L}_{t}^{(i)}]=\mathfrak{L}_{t}^{(i)}-t\mathbb{E}[\mathfrak{L}_{1}^{(i)}]$ for all $i\geq1$, the so-called Teugels martingales. We associate with the L\'{e}vy process $\mathfrak{L}$ the family of processes $(H^{(i)})_{i\geq1}$ defined by $H_{t}^{(i)}=\sum_{j=1}^{i}\alpha_{ij}Y_{t}^{(j)}$.
The martingales $H^{(i)}$, called the orthonormalized $i$th-power-jump processes, are strongly orthogonal and its predictable quadratic variation process
is $\langle H^{(i)},H^{(j)}\rangle_t=\delta_{ij}t$. For more details on Teugels martingales, one can see Bertoin \cite{Ber} and also Nualart and Schoutens \cite{NS:2000}.

We will denote by
\begin{itemize}
  \item $|.|$ the Euclidian norm on $\mathbb{R}^{d}$,
  \item $\mathcal{T}_{[t,T]}$ the set of all stopping times $\tau$ with values in $[t,T]$,
  \item $\mathcal{P}$ (resp. $\mathcal{O}(\mathbb{R}^{d})$) the predictable (resp. Optional) $\sigma$-algebra on $\Omega\times [0,T]$ (resp. on $\mathbb{R}^{d}$).
\end{itemize}

Let us introduce some spaces:
\begin{itemize}
  \item $\mathcal{S}^{2}$ is the space of  $\mathbb{R}$-valued and optional processes $(Y_t)_{t\leq T}$ such that
       $$\|Y\|_{\mathcal{S}^{2}}^{2}=\mathbb{E}\left[\operatorname*{ess
\,sup}_{\tau\in\mathcal{T}_{[0,T]}}|Y_{\tau}|^{2}\right] < +\infty.$$
  \item $\mathcal{H}^{2}$ is the space of $\mathbb{R}^d$-valued and predictable processes $(Z_t)_{t\leq T}$ such that
  $$\|Z\|_{\mathcal{H}^{2}}^{2}=\mathbb{E}\left[\int_0^T|Z_t|^{2}dt\right] < +\infty.$$
\item $\ell^2=\left\{z=(z_t)_{t\geq0};\ \|z\|_{\ell^2}=\left(\sum_{k=1}^\infty|z_t^{(k)}|^2\right)^\frac{1}{2}<+\infty\right\}$.
\item $\mathcal{H}^{2}(\ell^2)$: the space of the $\mathcal{F}_t$--predictable and $\ell^2$-valued processes $(Z_{t})_{t\leq T}$ such that
$$\|Z\|^2_{\mathcal{H}^{2}(\ell^2)}:=\mathbb{E}\left[\int_{0}^{T}\|Z_{t}\|_{\ell^2}^2dt\right]
=\sum_{k=1}^{\infty}\mathbb{E}\left[\int_{0}^{T}|Z_{t}^{(k)}|^2dt\right]<+\infty.$$
\item $\mathcal{B}^2:=\mathcal{S}^{2}\times\mathcal{H}^{2}(\ell^2)$ is a Banach space endowed with the norm
$$\|(Y,Z)\|^2_{\mathcal{B}^{2}}=\|Y\|_{\mathcal{S}^{2}}^{2}+\|Z\|^2_{\mathcal{H}^{2}(\ell^2)}.$$
\end{itemize}

\begin{remark}
Let $\beta>0$. There exists an equivalent norm $\|(.,.)\|_{\mathcal{B}^{2}_\beta}$ to the norm $\|(.,.)\|_{\mathcal{B}^{2}}$, defined in the Banach space $\mathcal{B}^2_\beta:=\mathcal{S}^{2}_\beta\times\mathcal{H}^{2}_\beta(\ell^2)$ as
$$\|(Y,Z)\|^2_{\mathcal{B}^{2}_\beta}=\|Y\|_{\mathcal{S}^{2}_\beta}^{2}+\|Z\|^2_{\mathcal{H}^{2}_\beta(\ell^2)}=\mathbb{E}\left[\operatorname*{ess
\,sup}_{\tau\in\mathcal{T}_{[0,T]}}e^{\beta \tau}|Y_{\tau}|^{2}\right]+\mathbb{E}\left[\int_{0}^{T}e^{\beta t}\|Z_{t}\|_{\ell^2}^2dt\right].$$
\end{remark}
\begin{definition}
A pair of functions $(f,g)$ is said to be a pair of Lipschitz drivers if
\begin{itemize}
\item  $f,g : \Omega \times [0,T] \times \mathbb{R} \times \ell^{2}\longrightarrow \mathbb{R}$ are progressively measurable.
\item $\mathbb{E}\int_0^T(|f(t,0,0)|^2+|g(t,0,0)|^2)dt<+\infty$.
\item There exists some nonnegative constants $L$ and $0<\alpha<\frac{1}{2}$ such that $\forall (t,y,z,y',z')\in [0,T]\times\mathbb{R} \times \ell^{2}\times \mathbb{R} \times \ell^{2}$
            $$|f(t,y,z)-f(t,y',z')|\leq L\left(|y-y'|+\|z-z'\|_{\ell^2}\right)$$
and
$$|g(t,y,z)-g(t,y',z')|^2\leq L|y-y'|^2+\alpha\|z-z'\|_{\ell^2}^2.$$
\end{itemize}
\end{definition}

\begin{definition}
We say that a function $\mathcal{Y}:[0,T]\rightarrow \mathbb{R}$ has regulated trajectories (or l\`{a}dl\`{a}g) if $\mathcal{Y}$ has a
left limit in each point of $]0,T]$, and a right limit in each point of $[0,T[$.
For a process $\mathcal{Y}$ with regulated trajectories, we denote
\begin{itemize}
  \item $\mathcal{Y}_{t-}=\lim\limits_{s\nearrow t}\mathcal{Y}_s$ the left-hand limit of $\mathcal{Y}$ at $t\in]0,T]$, $(\mathcal{Y}_{0-}=\mathcal{Y}_0)$, $\mathcal{Y}_-:=(\mathcal{Y}_{t-})_{t\leq T}$ and $\Delta \mathcal{Y}_t:=\mathcal{Y}_{t}-\mathcal{Y}_{t-}$ the size of the left jump of $\mathcal{Y}$ at $t$.
  \item $\mathcal{Y}_{t+}=\lim\limits_{s\searrow t}\mathcal{Y}_s$ the right-hand limit of $\mathcal{Y}$ at $t\in[0,T[$, $(\mathcal{Y}_{T+}=\mathcal{Y}_T)$, $\mathcal{Y}_+:=(\mathcal{Y}_{t+})_{t\leq T}$ and $\Delta_+\mathcal{Y}_t:=\mathcal{Y}_{t+}-\mathcal{Y}_{t}$ the size of the right jump of $\mathcal{Y}$ at $t$.
  \item For all $t\leq T$, $\mathcal{Y}_t=\mathcal{Y}^\ast_t+\sum_{s<t}\Delta_+\mathcal{Y}_s$ where $\mathcal{Y}^\ast$ is the right-continuous part of the process $\mathcal{Y}$ and $\sum_{s<t}\Delta_+\mathcal{Y}_s$ stands its purely jumping part
consisting of right jumps such that $\sum_{s<t}|\Delta_+\mathcal{Y}_s|<+\infty$ a.s.
\end{itemize}
\end{definition}

\subsection{Problem's formulation}
Let us now announce the definition of reflected BDSDEs driven by Teugels martingales associated with a pure-jump L\'{e}vy process. Let $\xi\in\mathcal{S}^2$. For all $t\leq T$, we define $\hat{\xi}_t:=\limsup\limits_{s\uparrow t,\;s<t}\xi_s$. $\hat{\xi}$ is predictable (see Theorem 90, page 225 in \cite{DM1}). It is left upper semi-continuous and is called the left upper semi-continuous envelope of $\xi$ (cf, Definition \ref{left} in Appendix).
\begin{definition}[RBDSDELs]
 Let $(f,g)$ be a pair of Lipschitz drivers and $\xi$ be an irregular barrier belongs to $\mathcal{S}^2$. The triple of processes $(Y,Z,K)$ is said to be solution to RBDSDEL associated with parameters $(f,g,\xi)$ if
\begin{eqnarray}
&&\hspace{-0.5cm}(i)\; (Y,Z,K)\in \mathcal{S}^{2}\times \mathcal{H}^{2}(\ell^2)\times\mathcal{S}^{2},\nonumber\\
&&\hspace{-0.5cm}(ii)\; Y_{t}=\xi_T +\int_{t}^{T} f(s,Y_{s},Z_{s})ds+\int_{t}^{T} g(s,Y_{s},Z_{s})dB_s+K_{T}-K_{t}-\sum_{k=1}^{\infty}\int_t^TZ_{s}^{(k)}dH_{s}^{(k)} \qquad t\leq T,\nonumber\\
&&\hspace{-0.5cm}(iii)\; Y_t\geq \xi_t \quad \forall t\leq T,\nonumber\\
&&\hspace{-0.5cm}(iv)\; K \mbox{ is a nondecreasing predictable process with regulated trajectories}\nonumber\\
 &&\quad \mbox{ such that }K_0=0,\;\; \mathbb{E}[K_T]<+\infty \mbox{ and }\quad\displaystyle\int_0^T(Y_{t-}-\hat\xi_{t})dK^{\ast}_t+\sum_{t<T}(Y_{t}-\xi_{t})\Delta_+K_t=0 \;a.s.\nonumber
\end{eqnarray}
\end{definition}

\begin{remark}
If $(Y,Z,K,C)$ is a solution to RBSDELs associated with parameters $(f,g,\xi)$, then the process $Y$ has regulated trajectories. Moreover, the process $\left(Y_t+\int_0^tf(s,Y_s,Z_s)ds\right)_{t\leq T}$ is an optional strong supermartingale.
\end{remark}

\begin{proposition}
Let $(Y,Z)\in\mathcal{S}^{2}\times\mathcal{H}^{2}(\ell^2)$ where $Y$ has a regulated trajectories. Then the process $\left(\sum\limits_{k=1}^{\infty}\int_0^{t} Y_{s-}Z_{s}^{(k)}dH_{s}^{(k)}\right)_{t\leq T}$ is a martingale.
\end{proposition}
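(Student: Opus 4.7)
The plan is to identify $M_t:=\sum_{k=1}^{\infty}\int_0^{t} Y_{s-}Z_s^{(k)}\,dH_s^{(k)}$ as the limit, in the Banach space of uniformly integrable martingales endowed with the norm $\|N\|:=\mathbb{E}[\sup_{t\leq T}|N_t|]$, of its finite partial sums $M^n_t:=\sum_{k=1}^{n}\int_0^{t} Y_{s-}Z_s^{(k)}\,dH_s^{(k)}$. Since $Y$ has regulated trajectories, $Y_-$ is left-continuous, and therefore predictable, so each integrand $Y_{s-}Z_s^{(k)}$ is predictable and each $M^n$ is well defined as a local martingale (with $\int_0^T Y_{s-}^2(Z_s^{(k)})^2\,ds<+\infty$ a.s., since $|Y_{s-}|\leq\sup_{s\leq T}|Y_s|<+\infty$ a.s. and $Z\in\mathcal{H}^2(\ell^2)$).

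The next step is to exploit the strong orthogonality $\langle H^{(i)},H^{(j)}\rangle_t=\delta_{ij}t$ to compute, for $n>m$,
$$\langle M^n-M^m\rangle_T=\sum_{k=m+1}^{n}\int_0^T Y_{s-}^2 (Z_s^{(k)})^2\,ds.$$
Applying the Burkholder--Davis--Gundy inequality in its $p=1$ form, followed by the Cauchy--Schwarz inequality and the definition of the $\mathcal{S}^2$-norm, I would obtain
$$\mathbb{E}\Bigl[\sup_{t\leq T}|M^n_t-M^m_t|\Bigr]\leq c\,\|Y\|_{\mathcal{S}^2}\Bigl(\sum_{k=m+1}^{n}\mathbb{E}\int_0^T (Z_s^{(k)})^2\,ds\Bigr)^{1/2}.$$
Because $Z\in\mathcal{H}^2(\ell^2)$, the right-hand side tends to $0$ as $m,n\to\infty$. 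In particular, taking $m=0$ already gives $\mathbb{E}[\sup_{t\leq T}|M^n_t|]<+\infty$, so each $M^n$ is a uniformly integrable (hence true) martingale; the Cauchy estimate then promotes $(M^n)_n$ to a Cauchy sequence in the Banach space above, and its limit is a true martingale that, by passing to the limit in each coordinate of the partial sum, must coincide with $M$.

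The technical heart of the argument, and the only point where something non-routine happens, is the passage from a finite to an infinite sum of stochastic integrals. Strong orthogonality of the Teugels martingales is indispensable here: it cancels all cross terms in the quadratic variation and reduces $\langle M\rangle_T$ to $\int_0^T Y_{s-}^2\|Z_s\|_{\ell^2}^2\,ds$, which is exactly the quantity controlled by the $\mathcal{S}^2\times\mathcal{H}^2(\ell^2)$-integrability through the BDG/Cauchy--Schwarz chain above. A minor point worth flagging is that $Y$ is only l\`{a}dl\`{a}g (not c\`{a}dl\`{a}g), which is precisely why one integrates against the predictable left-limit $Y_-$ and uses the essential-supremum form of the $\mathcal{S}^2$-norm recalled in the excerpt.
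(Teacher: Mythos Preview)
The paper states this proposition without supplying a proof, so there is nothing to compare your argument against directly. Your approach is correct and is the standard route: write $M^n-M^m$ as the stochastic integral of the predictable process $Y_-$ against the square-integrable martingale $N^n_t-N^m_t=\sum_{k=m+1}^{n}\int_0^t Z_s^{(k)}\,dH_s^{(k)}$, use the strong orthogonality $\langle H^{(i)},H^{(j)}\rangle_t=\delta_{ij}t$ to identify $\mathbb{E}[\langle N^n-N^m\rangle_T]=\sum_{k=m+1}^{n}\mathbb{E}\int_0^T(Z_s^{(k)})^2\,ds$, and combine the Burkholder--Davis--Gundy inequality with Cauchy--Schwarz to obtain the Cauchy estimate; the limit in the space of uniformly integrable martingales is then a true martingale coinciding with $M$.

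One minor precision worth making explicit: the BDG inequality involves the quadratic variation $[M^n-M^m]$ rather than the predictable bracket $\langle M^n-M^m\rangle$ you display, and for the purely discontinuous Teugels martingales these differ pathwise. Your bound is nonetheless valid because $[M^n-M^m]_T=\int_0^T Y_{s-}^2\,d[N^n-N^m]_s$, so $[M^n-M^m]_T^{1/2}\leq\sup_s|Y_{s-}|\cdot[N^n-N^m]_T^{1/2}$; applying Cauchy--Schwarz to the product and then $\mathbb{E}\big[[N^n-N^m]_T\big]=\mathbb{E}\big[\langle N^n-N^m\rangle_T\big]$ yields exactly the estimate you wrote.
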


\section{The main result: Existence and uniqueness of the solution}\label{s2}
\subsection{The uniqueness}

\begin{proposition}
Let $(f,g)$ be a pair of Lipschitz drivers and $\xi$ be a completely irregular barrier. The RBDSDEL associated with parameters $(f,g,\xi)$ has at most one solution.
\end{proposition}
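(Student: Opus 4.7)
The plan is to take two putative solutions $(Y^{i},Z^{i},K^{i})$, $i=1,2$, set $\bar Y=Y^{1}-Y^{2}$, $\bar Z=Z^{1}-Z^{2}$, $\bar K=K^{1}-K^{2}$, and derive an energy estimate forcing $\bar Y\equiv 0$. Since both $Y^{1}$ and $Y^{2}$ are làdlàg (the $K^{i}$ inherit right jumps from the Skorohod condition), I would apply a Gal'chouk--Lenglart type Itô formula for optional semimartingales with regulated trajectories (the very framework behind the monotonic limit theorem of \cite{Ma2} cited in the introduction) to $e^{\beta s}|\bar Y_{s}|^{2}$ on $[t,T]$. The terminal value is zero; the $dB$ integral and the Teugels-martingale integrals vanish in expectation thanks to $\langle H^{(i)},H^{(j)}\rangle_{s}=\delta_{ij}s$, and one is left with
\begin{equation*}
\mathbb{E}\bigl[e^{\beta t}|\bar Y_{t}|^{2}\bigr]+\beta\,\mathbb{E}\!\int_{t}^{T}\!e^{\beta s}|\bar Y_{s}|^{2}ds+\mathbb{E}\!\int_{t}^{T}\!e^{\beta s}\|\bar Z_{s}\|_{\ell^{2}}^{2}ds=\mathcal{A}+\mathcal{B},
\end{equation*}
where $\mathcal{A}$ collects the cross terms coming from $f,g$ and $\mathcal{B}=2\mathbb{E}\!\int_{t}^{T}e^{\beta s}\bar Y_{s-}\,d\bar K^{\ast}_{s}+2\mathbb{E}\sum_{t\le s<T}e^{\beta s}\bar Y_{s}\Delta_{+}\bar K_{s}$ is the reflection contribution.

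The crucial step is to show $\mathcal{B}\le 0$. From $Y^{i}_{s}\ge \xi_{s}$, passing to the left limsup, one gets $Y^{i}_{s-}\ge \hat{\xi}_{s}$, so the minimality condition $\int_{0}^{T}(Y^{i}_{s-}-\hat{\xi}_{s})dK^{i,\ast}_{s}=0$ gives
\begin{equation*}
\int_{0}^{T}(Y^{1}_{s-}-Y^{2}_{s-})dK^{1,\ast}_{s}=-\!\int_{0}^{T}(Y^{2}_{s-}-\hat{\xi}_{s})dK^{1,\ast}_{s}\le 0,
\end{equation*}
and symmetrically with the roles of $1$ and $2$ exchanged; adding yields $\int_{0}^{T}\bar Y_{s-}d\bar K^{\ast}_{s}\le 0$. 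For the right-jump part, $\Delta_{+}K^{1}_{s}>0$ forces $Y^{1}_{s}=\xi_{s}$, hence $Y^{1}_{s}-Y^{2}_{s}=\xi_{s}-Y^{2}_{s}\le 0$, so $(Y^{1}_{s}-Y^{2}_{s})\Delta_{+}K^{1}_{s}\le 0$, and again symmetrically. Weighting by $e^{\beta s}\ge 0$ preserves the sign, so $\mathcal{B}\le 0$.

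The term $\mathcal{A}$ is handled by Lipschitz: Young's inequality gives $2|\bar Y_{s}|\,|f(s,Y^{1},Z^{1})-f(s,Y^{2},Z^{2})|\le (2L+L^{2}/\varepsilon)|\bar Y_{s}|^{2}+\varepsilon\|\bar Z_{s}\|_{\ell^{2}}^{2}$, while the quadratic-variation contribution of the backward $dB$ integral produces $\mathbb{E}\!\int_{t}^{T}e^{\beta s}|g(s,Y^{1},Z^{1})-g(s,Y^{2},Z^{2})|^{2}ds\le \mathbb{E}\!\int_{t}^{T}e^{\beta s}(L|\bar Y_{s}|^{2}+\alpha\|\bar Z_{s}\|_{\ell^{2}}^{2})ds$. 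The coefficient of $\|\bar Z_{s}\|_{\ell^{2}}^{2}$ on the right is $\alpha+\varepsilon$; since $\alpha<1/2$, choosing $\varepsilon$ small enough (e.g.\ $\varepsilon=(1-2\alpha)/4$) and then $\beta$ large enough to dominate the $|\bar Y_{s}|^{2}$ coefficients produces
\begin{equation*}
\mathbb{E}\bigl[e^{\beta t}|\bar Y_{t}|^{2}\bigr]+c\,\mathbb{E}\!\int_{t}^{T}\!e^{\beta s}\|\bar Z_{s}\|_{\ell^{2}}^{2}ds\le 0
\end{equation*}
with $c>0$. Hence $\bar Y\equiv 0$ and $\bar Z\equiv 0$ in $\mathcal{B}^{2}_{\beta}$, and plugging back into the RBDSDEL yields $\bar K\equiv 0$.

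The main obstacle is the Itô calculus itself: because $Y$ and $K$ are only làdlàg, one must invoke the generalized change-of-variables formula for optional (regulated) semimartingales, carefully splitting $K=K^{\ast}+\sum \Delta_{+}K$ to separate the predictable-left integral $\int \bar Y_{s-}\,d\bar K^{\ast}_{s}$ from the discrete sum $\sum \bar Y_{s}\Delta_{+}\bar K_{s}$, so that each piece can be paired with the correct half of the \emph{mixed} Skorohod minimality condition (testing $Y_{s-}$ against $\hat{\xi}_{s}$ on the left part and $Y_{s}$ against $\xi_{s}$ on the right-jump part). Once that accounting is set up correctly, the remainder of the argument is the standard Lipschitz--$\beta$-weighted estimate.
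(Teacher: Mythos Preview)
Your proposal is correct and follows essentially the same route as the paper: apply the It\^{o} formula for regulated processes to the squared difference, show the reflection contribution is nonpositive by pairing each half of the Skorohod condition with the correct piece of $\bar K$ (the $dK^{\ast}$ part against $Y_{s-}-\hat\xi_{s}$ and the $\Delta_{+}K$ part against $Y_{s}-\xi_{s}$), and close with the Lipschitz bounds on $f,g$ exploiting $\alpha<1/2$. The only cosmetic difference is that the paper works with the unweighted $|\bar Y_{t}|^{2}$ and finishes via Gronwall's lemma, whereas you build the exponential weight $e^{\beta t}$ into the It\^{o} formula and absorb the $|\bar Y_{s}|^{2}$ terms by choosing $\beta$ large; these are interchangeable standard devices.
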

\begin{proof}
Let us consider two solutions $(Y,Z,K,)$ and $(Y',Z',K')$ of RBDSDEL associated with parameters $(f,g,\xi)$. Denote $\overline{\Re}=\Re-\Re'$ for $\Re\in\{Y,Z,K\}$. By applying the Proposition \ref{pro} to $|\overline{Y}_t|^2$, we get
\begin{eqnarray*}
|\overline{Y}_t|^2+\int_t^T\|\overline{Z}_{s}\|_{\ell^2}^2ds
&=&2\int_t^T\overline{Y}_s(f(s,Y_{s},Z_{s})-f'(s,Y'_{s},Z'_{s}))ds
+2\int_t^T\overline{Y}_s(g(s,Y_{s},Z_{s})-g'(s,Y'_{s},Z'_{s}))dB_s\\
&&+2\int_t^T\overline{Y}_{s-}d\overline{K}^\ast_{s}-2\sum_{k=1}^{\infty}\int_t^T\overline{Y}_{s-}\overline{Z}_{s}^{(k)}dH_{s}^{(k)}
+\int_t^T|g(s,Y_{s},Z_{s})-g'(s,Y'_{s},Z'_{s})|^2ds\\
&&-\sum_{t< s\leq T}|\Delta_-\overline{Y}_{s}|^2-\sum_{t\leq s<T}|\Delta_+\overline{Y}_{s}|^2+2\sum_{t\leq s<T}\overline{Y}_{s}\Delta_+\overline{K}_{s}\\
&\leq&(3L+\varrho L^2)\int_t^T|\overline{Y}_s|^2ds+\left(\frac{1}{\varrho}+\alpha\right)\int_t^T\|\overline{Z}_s\|_{\ell^2}^2ds+2\int_t^T\overline{Y}_{s-}d\overline{K}^\ast_{s}
+2\sum_{t\leq s<T}\overline{Y}_{s}\Delta_+\overline{K}_{s}\\
&&+2\int_t^T\overline{Y}_s(g(s,Y_{s},Z_{s})-g'(s,Y'_{s},Z'_{s}))dB_s-2\sum_{k=1}^{\infty}\int_t^T\overline{Y}_{s-}\overline{Z}_{s}^{(k)}dH_{s}^{(k)}
\end{eqnarray*}
for some $\varrho>0$, where we have used the Lipschitz property of $(f,g)$. Thanks to the Skorokhod condition on $K$, we have
\begin{eqnarray*}
\int_t^T\overline{Y}_{s-}d\overline{K}^\ast_{s}&=&\int_t^T(Y_{s-}-\hat{\xi}_{s})dK^\ast_{s}-\int_t^T(Y'_{s-}-\hat{\xi}_{s})dK^\ast_{s}
+\int_t^T(Y'_{s-}-\hat{\xi}_{s})dK^{'\ast}_{s}
-\int_t^T(Y_{s-}-\hat{\xi}_{s})dK^{'\ast}_{s}\leq 0
\end{eqnarray*}
and
\begin{eqnarray*}
\sum_{t\leq s<T}\overline{Y}_{s}\Delta_+\overline{K}_{s}&=&\sum_{t\leq s<T}(Y_{s}-\xi_{s})\Delta_+K_{s}-\sum_{t\leq s<T}(Y'_{s}-\xi_{s})\Delta_+K_{s}
+\sum_{t\leq s<T}(Y'_{s}-\xi_{s})\Delta_+K'_{s}-\sum_{t\leq s<T}(Y_{s}-\xi_{s})\Delta_+K'_{s}\leq 0.
\end{eqnarray*}
Then for $\varrho>\frac{1}{1-\alpha}$ we obtain
\begin{equation*}
\mathbb{E}|\overline{Y}_t|^2+\mathbb{E}\int_t^T\|\overline{Z}_{s}\|_{\ell^2}^2ds\leq C_{_{L,\alpha}}\mathbb{E}\int_t^T|\overline{Y}_s|^2ds
\end{equation*}
where $C_{_{L,\alpha}}$ is a positive constant which depends on $L$ and $\alpha$. Consequently, according to Gronwall's lemma we obtain $Y=Y'$, $Z=Z'$ and thus $K=K'$.
\end{proof}

\subsection{The existence via penalization method}

Let us first consider the special case when the coefficient $g$ does not depend on the solution. We put $g(t,y,z):=\widetilde{g}(t)$ such that $\mathbb{E}\int_0^T|\widetilde{g}(t)|^2dt<+\infty$.
\begin{lemma}\label{tt}
Let $f$ be a Lipschitz driver and $\xi$ be a completely irregular barrier. Then the RBDSDEL associated with parameters $(f,\widetilde{g},\xi)$ admits a unique solution.
\end{lemma}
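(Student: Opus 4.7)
The plan is to establish existence by a modified penalization scheme tailored to the regulated barrier, following the strategy of Marzougue \cite{Ma2} in the reflected BSDE setting and transferring it to the present doubly stochastic Lévy context. Uniqueness is already guaranteed by the preceding Proposition, so only existence requires work.

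For each $n \geq 1$ I would introduce the non-reflected penalized BDSDEL
\[
Y^n_t = \xi_T + \int_t^T f(s, Y^n_s, Z^n_s)\,ds + \int_t^T \widetilde{g}(s)\,dB_s + K^n_T - K^n_t - \sum_{k=1}^\infty \int_t^T Z^{n,(k)}_s\,dH^{(k)}_s,
\]
with a two-part penalizing process
\[
K^n_t := n\int_0^t (Y^n_s - \hat\xi_s)^-\,ds + \sum_{0 \leq s < t}(Y^n_s - \xi_s)^-,
\]
in which the sum runs over the (at most countable) set of right-discontinuities of $\xi$. Well-posedness of $(Y^n, Z^n) \in \mathcal{B}^2$ then follows from the non-reflected BDSDEL theory of Ren et al.\ \cite{RLH}: the continuous penalty is Lipschitz in $y$ (with constant $n$) and is absorbed into the driver, while the jump penalty is resolved by backward induction across the countably many right-jump times of $\xi$. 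A comparison principle for BDSDELs then yields $Y^n \leq Y^{n+1}$ since increasing $n$ enlarges the driver.

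The next step is to derive uniform a priori bounds. Applying Proposition \ref{pro} to $e^{\beta t}(Y^n_t)^2$ for $\beta$ large enough, and using the Lipschitz property of $f$ together with the $L^2$-integrability of $\widetilde{g}$, I expect to obtain
\[
\sup_n \Bigl(\|Y^n\|_{\mathcal{S}^2_\beta}^2 + \|Z^n\|_{\mathcal{H}^2_\beta(\ell^2)}^2 + \mathbb{E}[(K^n_T)^2]\Bigr) < +\infty,
\]
the delicate point being the control of the cross term involving $K^n$ via comparison against a suitable supersolution built from $\hat\xi$. Combined with monotonicity this gives $Y^n \nearrow Y \in \mathcal{S}^2$ pointwise and $K^n_t \nearrow K_t$ with $\mathbb{E}[K_T] < +\infty$. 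The bounds
\[
\mathbb{E}\int_0^T (Y^n_s - \hat\xi_s)^-\,ds \leq C/n, \qquad \mathbb{E}\sum_{s<T}(Y^n_s - \xi_s)^- \leq C/n,
\]
force $Y \geq \hat\xi$ a.e.\ and $Y \geq \xi$ at the right-discontinuity points of $\xi$, which together yield $Y_t \geq \xi_t$ for all $t \leq T$.

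The hardest step is the passage to the limit together with the verification of the Skorokhod minimality condition. Here I would invoke Marzougue's monotonic limit theorem for regulated processes \cite{Ma2} to conclude that $Y$ has regulated trajectories, that $Z^n \to Z$ strongly in $\mathcal{H}^2(\ell^2)$, and that $K$ is a predictable nondecreasing process with regulated trajectories whose right-continuous part $K^{\ast}$ arises as the limit of $n\int_0^\cdot (Y^n_s - \hat\xi_s)^-\,ds$ and whose right-jump part $\sum \Delta_+ K$ arises from the limit of $\sum(Y^n_s - \xi_s)^-$. Passing to the limit in the penalized equation recovers identity (ii) of the definition, and the identification of $K^\ast$ and $\Delta_+ K$ just described, combined with $Y \geq \hat\xi$ and $Y_t \geq \xi_t$ at right-jump points, yields
\[
\int_0^T(Y_{t-} - \hat\xi_t)\,dK^{\ast}_t + \sum_{t<T}(Y_t - \xi_t)\,\Delta_+ K_t = 0.
\]
The main obstacle is precisely this simultaneous identification of the continuous and right-jumping components of $K$, which is the reason the scheme must be carefully modified by adding the explicit right-jump penalty and which makes the new monotonic limit theorem of \cite{Ma2} indispensable in the regulated setting.
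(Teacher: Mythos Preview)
Your overall strategy---penalization plus Marzougue's monotonic limit theorem---matches the paper, but two concrete points in your scheme do not line up with what actually makes the argument work.

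\textbf{The form of the penalty.} The paper's jump penalty is $(Y^n_{\sigma_{n,i}+}-\xi_{\sigma_{n,i}})^-$, using the \emph{right limit} $Y^n_{\sigma_{n,i}+}$, and is summed only over a \emph{finite} graded array $\{\sigma_{n,i}\}_{i\le k_n}$ of stopping times capturing the right jumps of $\xi$ of size below $-1/n$. This is what allows well-posedness by backward induction: on each interval $(\sigma_{n,i-1},\sigma_{n,i}]$ one solves a standard BDSDEL with terminal value $\xi_{\sigma_{n,i}}\vee Y^n_{\sigma_{n,i}+}$, already known from the previous step. Your penalty $(Y^n_s-\xi_s)^-$ is self-referential (the jump at $s$ depends on $Y^n_s$, which depends on the jump), and summing over \emph{all} right-discontinuities of $\xi$ may give infinitely many terms, so the appeal to \cite{RLH} for well-posedness is not justified as written. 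Also, the paper uses $\xi$, not $\hat\xi$, in the continuous penalty $n\int_0^t(Y^n_s-\xi_s)^-\,ds$.

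\textbf{The Skorokhod condition.} The paper does \emph{not} verify minimality by directly identifying the limits of the two penalty pieces as you propose. After strong convergence (obtained via the monotonic limit theorem and a Cauchy argument), the paper shows that $Y_t$ equals the value of an optimal stopping problem, applies the Mertens decomposition to the associated Snell envelope, and invokes Corollary~3.11 of \cite{KRS} to get $\int_0^T(Y_{t-}-\hat\xi_t)\,dK^\ast_t+\sum_{t<T}(Y_t-\xi_t)\Delta_+K_t=0$. Your claim that $K^\ast$ is the limit of $n\int_0^\cdot(Y^n_s-\hat\xi_s)^-\,ds$ and charges only $\{Y_-=\hat\xi\}$ is exactly the hard point; it does not fall out of the monotonic limit theorem by itself, and your asserted bound $\mathbb{E}\sum_{s<T}(Y^n_s-\xi_s)^-\le C/n$ has no clear source (the a~priori estimate bounds $K^n_T$ uniformly, not by $C/n$).
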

\begin{proof}
We rely on a penalization method by approximating the irregular barrier $\xi$, which has been introduced by Klimsiak et al. \cite{KRS} and later by Marzougue \cite{Ma2}. For each $n\in\mathbb{N}$, we consider the following penalized version of BDSDEL
\begin{eqnarray}\label{penal}
Y^n_{t}&=&\xi_T +\int_{t}^{T} f(s,Y^n_s,Z^n_s)ds+\int_{t}^{T} \widetilde{g}(s)dB_s-\sum_{k=1}^{\infty}\int_t^TZ_{s}^{(k),n}dH_{s}^{(k)}\nonumber\\
&&+n\int_t^T(Y^n_{s}-\xi_{s})^-ds+\sum_{t\leq \sigma_{n,i}<T}(Y^n_{\sigma_{n,i}+}-\xi_{\sigma_{n,i}})^-
\end{eqnarray}
with specially defied arrays of stopping times $\{\sigma_{n,i}\}$ exhausting right-side jumps of $\xi$.
We define $\{\sigma_{n,i}\}$ inductively. We fist set
 $$\displaystyle\left\{
  \begin{array}{ll}
    \sigma_{1,0}=0 , & \hbox{} \\
    \sigma_{1,i}=\inf\{t>\sigma_{1,i-1}\;\; |\;\;  \Delta_+\xi_t<-1 \}\wedge T, & \hbox{\quad$i=1,...,k_1$ for $k_1\in\mathbb{N}$.}
  \end{array}
\right.$$
Next, for $n\in\mathbb{N}$ and given array $\{\sigma_{n,i}\}$ we set
 $$\displaystyle\left\{
  \begin{array}{ll}
    \sigma_{n+1,0}=0 , & \hbox{} \\
    \sigma_{n+1,i}=\inf\{t>\sigma_{n+1,i-1}\;\; |\;\; \Delta_+\xi_t<-\frac{1}{n+1}\}\wedge T, & \hbox{\quad$i=1,...,j_{n+1}$}
  \end{array}
\right.$$
with $j_{n+1}$ is chosen so that $\mathbb{P}(\sigma_{n+1,j_{n+1}}<T)\rightarrow 0$ as $n\rightarrow +\infty$ and
$$\sigma_{n+1,i}=\sigma_{n+1,j_{n+1}}\vee\sigma_{n,i-j_{n+1}-1},\quad i=j_{n+1}+1,...,k_{n+1},\quad k_{n+1}=j_{n+1}+k_{n}+1.$$

According to the work of Ren et al. \cite{RLH}, on each interval $(\sigma_{n,i-1},\sigma_{n,i}]$, $i=1,...,k_{n}+1$ with $\sigma_{n,k_{n}+1}=T$, there exists a unique process $(Y^n,Z^n)$ solution of the following BDSDEL
\begin{eqnarray}\label{mod}
Y^n_{t}&=&\xi_{\sigma_{n,i}}\vee Y^n_{\sigma_{n,i}+}+\int_{t}^{\sigma_{n,i}} f(s,Y^n_s,Z^n_s)ds+\int_{t}^{\sigma_{n,i}} \widetilde{g}(s)dB_s\nonumber\\
&&+n\int_t^{\sigma_{n,i}}(Y^n_{s}-\xi_{s})^-ds-\sum_{k=1}^{\infty}\int_t^TZ_{s}^{(k),n}dH_{s}^{(k)}\qquad t\in(\sigma_{n,i-1},\sigma_{n,i}]
\end{eqnarray}
with the convention $Y^n_{T}=\xi_T$ and $Y^n_{0}=\xi_{0}\vee Y^n_{0+}$. On the other hand, the BDSDEL \eqref{penal} can be written as
\begin{equation}\label{penali}
Y^n_{t}=\xi_T +\int_{t}^{T}  f(s,Y^n_s,Z^n_s)ds+\int_{t}^{T} \widetilde{g}(s)dB_s+K^n_{T}-K^n_{t}-\sum_{k=1}^{\infty}\int_t^TZ_{s}^{(k),n}dH_{s}^{(k)}
\end{equation}
where
$$K^{n}_t:=K^{n,\ast}_t+\sum_{0\leq s<t}\Delta_+K^{n}_s=n\int_0^t(Y^n_{s}-\xi_{s})^-ds+\sum_{0\leq \sigma_{n,i}<t}(Y^n_{\sigma_{n,i}+}-\xi_{\sigma_{n,i}})^-.$$
It remains to establish the convergence of the sequence $(Y^n,Z^n,K^n)_{n\geq 1}$ to the solution of the RBDSDEL associated with parameters $(f,\widetilde{g},\xi)$. For this end, we divided the proof into four steps:
~\\
\textbf{Step 1: A priori estimate.}
~\\
There exists a positive constant $\mathcal{C}_{_{\beta,L}}$ independent on $n$ such that for all $\beta$ large enough
\begin{eqnarray*}
&&\mathbb{E}\operatorname*{ess
\,sup}_{\tau\in\mathcal{T}_{[0,T]}}e^{\beta \tau}|Y^n_{\tau}|^2+\mathbb{E}\int_0^Te^{\beta s}|Y^n_{s}|^2ds+\mathbb{E}\int_0^Te^{\beta s}\|Z^n_{s}\|_{\ell^2}^2ds+\mathbb{E}|K^n_T|^2\nonumber\\
 &\leq&\mathcal{C}_{_{\beta,L}}\left(\mathbb{E}\operatorname*{ess
\,sup}_{\tau\in\mathcal{T}_{[0,T]}}e^{2\beta \tau}|\xi_{\tau}|^2+\mathbb{E}\int_{0}^{T}e^{\beta s}|f(s,0,0)|^2ds+\mathbb{E}\int_0^Te^{\beta s}|\widetilde{g}(s)|^2ds\right).
\end{eqnarray*}

Indeed, by applying the Proposition \ref{pro} to $e^{\beta t}|Y^n_t|^2$, we have
\begin{eqnarray}\label{e2}
&&e^{\beta t}|Y^n_t|^2+\beta\int_t^Te^{\beta s}|Y^n_{s}|^2ds+\int_t^Te^{\beta s}\|Z^n_{s}\|_{\ell^2}^2ds\nonumber\\
 &=&e^{\beta T}|\xi_T|^2+2\int_{t}^{T}e^{\beta s}Y^n_{s}f(s,Y^n_s,Z^n_s)ds+2\int_{t}^{T}e^{\beta s}Y^n_{s}\widetilde{g}(s)dB_s\nonumber\\
 &&+2\int_{t}^{T}e^{\beta s}Y^n_{s-}dK^{n,\ast}_s-2\sum_{k=1}^{\infty}\int_t^Te^{\beta s}Y^n_{s-}Z_{s}^{(k),n}dH_{s}^{(k)}+\int_t^Te^{\beta s}|\widetilde{g}(s)|^2ds\nonumber\\
 &&-\sum_{t<s\leq T}e^{\beta s}|\Delta Y^n_{s}|^2-\sum_{t\leq s<T}e^{\beta s}(|Y^n_{s+}|^2-|Y^n_{s}|^2).
\end{eqnarray}
Observe that
\begin{equation*}
|Y^n_{s+}|^2-|Y^n_{s}|^2=|\Delta_+Y^n_s|^2+2Y^n_s\Delta_+Y^n_s=|\Delta_+Y^n_s|^2-2Y^n_s\Delta_+K^{n}_s.
\end{equation*}
Moreover, according to the Lipschitz property of $f$ we have
\begin{eqnarray*}
2\int_{t}^{T}e^{\beta s}Y^n_{s}f(s,Y^n_s,Z^n_s)ds&\leq&\varrho_{_2}\int_t^Te^{\beta s}|Y^n_{s}|^2ds+\frac{1}{\varrho_{_2}}\int_t^Te^{\beta s}\left|f(s,Y^n_s,Z^n_s)\right|^2ds\\
&\leq& \left(\varrho_{_2}+\frac{3L^2}{\varrho_{_2}}\right)\int_t^Te^{\beta s}|Y^n_{s}|^2ds+\frac{3L^2}{\varrho_{_2}}\int_t^Te^{\beta s}\|Z^n_{s}\|_{\ell^2}^2ds\\
&&+\frac{3}{\varrho_{_2}}\int_t^Te^{\beta s}\left|f(s,0,0)\right|^2ds
\end{eqnarray*}
for some $\varrho_{_2}>0$. On the other hand, for each $t\in(\sigma_{n,i-1},\sigma_{n,i}]$ it holds true that
\begin{eqnarray*}
\int_{0}^{t}e^{\beta s}Y^n_{s-}dK^{n,\ast}_s+\sum_{s<t}e^{\beta s}Y^n_s\Delta_+K^{n}_s&=&\int_{0}^{t}e^{\beta s}Y^n_{s-}dK^{n}_s\\
&=&\int_0^te^{\beta s}Y^n_{s}n(Y^n_s-\xi_s)^-ds\\
&=&\int_0^te^{\beta s}\xi_{s}n(Y^n_s-\xi_s)^-ds-\int_0^te^{\beta s}n((Y^n_s-\xi_s)^-)^2ds\\
&\leq&\int_0^te^{\beta s}\xi_{s}n(Y^n_s-\xi_s)^-ds=\int_{0}^{t}e^{\beta s}\xi_{s}dK^{n}_s\\
&\leq&\frac{\varrho_{_3}}{2}\operatorname*{ess
\,sup}_{\tau\in\mathcal{T}_{[0,T]}}e^{2\beta \tau}|\xi_{\tau}|^2+\frac{1}{2\varrho_{_3}}\left|K^{n}_T\right|^2,
\end{eqnarray*}
for some $\varrho_{_3}>0$. Plugging the above observations on \eqref{e2} and taking the expectation we obtain for $\varrho_{_2}>3L^2$ and $\beta>\varrho_{_2}+\frac{3L^2}{\varrho_{_2}}$
\begin{eqnarray*}
&&\mathbb{E}e^{\beta t}|Y^n_t|^2+\mathbb{E}\int_t^Te^{\beta s}|Y^n_{s}|^2ds+\mathbb{E}\int_t^Te^{\beta s}\|Z^n_{s}\|_{\ell^2}^2ds\nonumber\\
 &\leq&C_{_{\beta,L}}\left(\mathbb{E}\operatorname*{ess
\,sup}_{\tau\in\mathcal{T}_{[0,T]}}e^{2\beta \tau}|\xi_{\tau}|^2+\mathbb{E}\int_{t}^{T}e^{\beta s}|f(s,0,0)|^2ds+\mathbb{E}\int_t^Te^{\beta s}|\widetilde{g}(s)|^2ds\right.\nonumber\\
 &&\left.+\varrho_{_3}\mathbb{E}\operatorname*{ess
\,sup}_{\tau\in\mathcal{T}_{[0,T]}}e^{2\beta \tau}|\xi_{\tau}|^2+\frac{1}{\varrho_{_3}}\mathbb{E}\left|K^{n}_T\right|^2\right)
\end{eqnarray*}
where $C_{_{\beta,L}}$ is a positive constant which depends on $\beta$ and $L$. Moreover, since
$$K^n_T=Y^n_0-\xi_T-\int_0^Tf(s,Y^n_s,Z^n_s)ds-\int_{0}^{T} \widetilde{g}(s)dB_s+\sum_{k=1}^{\infty}\int_0^TZ_{s}^{(k),n}dH_{s}^{(k)},$$
then
\begin{eqnarray*}
\mathbb{E}|K^n_T|^2&\leq&5\left(\mathbb{E}|Y^n_0|^2+\mathbb{E}e^{\beta T}|\xi_T|^2+3L^2\mathbb{E}\int_0^Te^{\beta s}|Y^n_{s}|^2ds+(3L^2+1)\mathbb{E}\int_0^Te^{\beta s}\|Z^n_{s}\|_{\ell^2}^2ds\right.\nonumber\\
&&\left.+3\mathbb{E}\int_0^Te^{\beta s}\left|f(s,0,0)\right|^2ds
+\mathbb{E}\int_0^Te^{\beta s}\left|\widetilde{g}(s)\right|^2ds\right).
\end{eqnarray*}
Hence, for $\varrho_{_3}>5C_{_{\beta,L}}(3L^2+1)$
\begin{eqnarray*}
&&\mathbb{E}\int_0^Te^{\beta s}|Y^n_{s}|^2ds+\mathbb{E}\int_0^Te^{\beta s}\|Z^n_{s}\|_{\ell^2}^2ds+\mathbb{E}|K^n_T|^2\nonumber\\
 &\leq&C'_{_{\beta,L}}\left(\mathbb{E}\operatorname*{ess
\,sup}_{\tau\in\mathcal{T}_{[0,T]}}e^{2\beta \tau}|\xi_{\tau}|^2+\mathbb{E}\int_{0}^{T}e^{\beta s}|f(s,0,0)|^2ds+\mathbb{E}\int_0^Te^{\beta s}|\widetilde{g}(s)|^2ds\right).
\end{eqnarray*}
To conclude, both forward and backward version of Burkholder-Davis-Gundy inequality use in Pardoux and Peng \cite{PP2} yield
\begin{eqnarray*}
\mathbb{E}\operatorname*{ess
\,sup}_{\tau\in\mathcal{T}_{[0,T]}}e^{\beta \tau}|Y^n_{\tau}|^2
 &\leq&C''_{_{\beta,L}}\left(\mathbb{E}\operatorname*{ess
\,sup}_{\tau\in\mathcal{T}_{[0,T]}}e^{2\beta \tau}|\xi_{\tau}|^2+\mathbb{E}\int_{0}^{T}e^{\beta s}|f(s,0,0)|^2ds+\mathbb{E}\int_0^Te^{\beta s}|\widetilde{g}(s)|^2ds\right).
\end{eqnarray*}
Whence the desired result fellows.
~\\
\textbf{Step 2: There exists a process $Y$ with regulated trajectories such that $Y\geq \xi$ and $\mathbb{E}\left[\sup\limits_{0\leq t\leq T}|(Y^n_{t}-\xi_{t})^-|^2\right]\xrightarrow[n\to +\infty]{}0.$}
~\\
Recall that $Y^n$ satisfies the modified BDSDEL (\ref{mod}) with terminal value $\xi_{\sigma_{n,i}}\vee Y^n_{\sigma_{n,i}+}$ and first generator $f^n(.,y,z)=f(.,y,z)+n(y-\xi_{.})^-$ on each interval $(\sigma_{n,i-1},\sigma_{n,i}]$, $i=1,...,k_{n}+1$. Since $f^n(t,y,z)\leq f^{n+1}(t,y,z)$ then from the proposition \ref{comp} (see Appendix), we obtain that $Y^{n}\leq Y^{n+1}$ a.s. Hence there exists a process $Y$ such that $Y^n\nearrow Y$ a.s. Since $Y^n$ is bounded in $\mathcal{S}^2$, then by Fatou's lemma $Y$ is also bounded in $\mathcal{S}^2$.

Moreover, thanks to step 1, the sequences $(Z^n)_{n\geq0}$ and $(f(.,Y^n,Z^n))_{n\geq0}$ are bounded in $\mathcal{H}^2(\ell^2)$ and $\mathcal{H}^2$ respectively. Then we can extract
subsequences which weakly converge in the related space. We note $\mathcal{Z}$ and $\overline{f}$ the respective weak limits. Henceforth, for every stopping time $\tau\in\mathcal{T}_{[0,T]}$, the following weak convergence holds
$$\int_{0}^{\tau}Z_{s}^{(k),n}dH_{s}^{(k)}\rightharpoonup \int_{0}^{\tau}\mathcal{Z}^{(k)}_{s}dH_{s}^{(k)}\quad\mbox{and}\quad\int_0^\tau f(s,Y^n_s,Z^n_s)ds\rightharpoonup \int_0^\tau \overline{f}(s)ds$$
as $n\to +\infty$. Next, from the equation
\begin{equation*}
K^n_{\tau}=Y^n_0-Y^n_\tau -\int_0^\tau f(s,Y^n_s,Z^n_s)ds-\int_0^\tau \widetilde{g}(s)dB_s+\sum_{k=1}^{\infty}\int_0^\tau Z_{s}^{(k),n}dH_{s}^{(k)}.
\end{equation*}
we have the following weak convergence
\begin{equation*}
K^n_{\tau}\rightharpoonup \mathcal{K}_\tau:=Y_0-Y_\tau -\int_0^\tau \overline{f}(s)ds-\int_0^\tau \widetilde{g}(s)dB_s+\sum_{k=1}^{\infty}\int_0^\tau \mathcal{Z}^{(k)}_{s}dH_{s}^{(k)}.
\end{equation*}
From Fatou's lemma we have $\mathbb{E}|\mathcal{K}_T|^2\;\leq\;\liminf_{n\rightarrow +\infty}\mathbb{E}|K^n_T|^2<+\infty$.
Moreover, since the process $(K_t^n)_{t\leq T}$ is nondecreasing predictable process with $K_0^n=0$, then the weak limit process $(\mathcal{K}_t)_{t\leq T}$ is also nondecreasing predictable process with $\mathcal{K}_0=0$.

On the other hand, we know that $\int_\sigma^\tau(Y_{s}-Y^n_{s})dK^{n,\ast}_s+ \sum_{\sigma\leq s<\tau}(Y_{s}-Y^n_{s})\Delta_+K^{n}_{s}\geq 0$ for some stopping times $\sigma, \tau\in\mathcal{T}_{[0,T]}$ such that $\sigma\leq\tau$, then $\liminf_{n\to +\infty}\left(\int_\sigma^\tau(Y_{s}-Y^n_{s})dK^{n,\ast}_s+ \sum_{\sigma\leq s<\tau}(Y_{s}-Y^n_{s})\Delta_+K^{n}_{s}\right)\geq 0$. Also, It is easy to see that $\Delta_-K_t^n=0$ for $n\geq 0$ and $t\leq T$.
Therefore, thanks to the Marzougue's monotonic limit theorem (Theorem 2.1 in \cite{Ma2}), the processes $Y$ and $\mathcal{K}$ are regulated, and $\int_{0}^{t}\overline{f}(s)ds=\int_{0}^{t}f(s,Y_s,Z_s)ds$ for all $t\leq T$ a.s. where $Z$ is the strong limit of $(Z^n)_{n\geq0}$.

Next, according to the boundedness of $(K_t^n)_{t\leq T}$ we deduce that $\mathbb{E}\int_0^T(Y_{s}-\xi_{s})^-ds=0$ which implies that $Y_{t}\geq \xi_{t}$  for all $t\leq T$ $\mathbb{P}$-a.s. In particular, $(Y^n_t-L_t)^-\searrow 0$ for all $t\leq T$ $\mathbb{P}$-a.s.
Consequently, from a generalized Dini's lemma (see page 202 in \cite{DM2}), we have $\sup_{0\leq t\leq T}(Y^n_{t}-L_{t})^-\searrow 0$ for all $t\leq T$ $\mathbb{P}$-a.s. Therefore, since $|(Y^n_{t}-L_{t})^-|\leq |Y^0_t|+|L^+_t|$, the Lebesgue's dominated convergence theorem implies that
$$\mathbb{E}\left[\sup_{0\leq t\leq T}|(Y^n_t-L_t)^-|^2\right]\xrightarrow[n\to +\infty]{}0\quad \mbox{a.s}.$$
~\\
\textbf{Step 3: Strong convergence result.}
~\\
There exists an adapted process $(Y,Z,K)$ such that
\begin{eqnarray*}
 &&\|Y^{n}-Y\|_{\mathcal{S}^2}^{2}+\|Z^{n}-Z\|_{\mathcal{H}^2(\ell^2)}^{2}+\|K^{n}-K\|_{\mathcal{S}^2}^{2}\xrightarrow[n\to +\infty]{}0.
\end{eqnarray*}
Indeed, let us put $\Re^{n,p}=\Re^n-\Re^p$ for each $n\geq p\geq0$ and for $\Re\in\{Y,Z,K\}$. By applying the Proposition \ref{pro} to $|Y^{n,p}_t|^2$, we have
\begin{eqnarray*}
|Y^{n,p}_t|^2+\int_t^T\|Z^{n,p}_{s}\|_{\ell^2}^2ds
 &=&2\int_{t}^{T}Y^{n,p}_{s}(f(s,Y^n_s,Z^n_s)-f(s,Y^p_s,Z^p_s))ds+2\int_{t}^{T}Y^{n,p}_{s-}dK^{n,p,\ast}_s\nonumber\\
 &&-2\sum_{k=1}^{\infty}\int_t^TY^{n,p}_{s-}Z_{s}^{(k),n,p}dH_{s}^{(k)}-\sum_{t<s\leq T}|\Delta Y^{n,p}_{s}|^2-\sum_{t\leq s<T}(|Y^{n,p}_{s+}|^2-|Y^{n,p}_{s}|^2).
\end{eqnarray*}
Remark that
\begin{equation*}
|Y^{n,p}_{s+}|^2-|Y^{n,p}_{s}|^2=|\Delta_+Y^{n,p}_s|^2+2Y^{n,p}_s\Delta_+Y^{n,p}_s=|\Delta_+Y^{n,p}_s|^2-2Y^{n,p}_s\Delta_+K^{n,p}_s,
\end{equation*}
and
$$\int_{t}^{T}Y_{s-}^{n,p}dK_{s}^{n,p}\leq -\int_{t}^{T}(Y_{s-}^n-\xi_s)dK_{s}^{p}\leq\sup_{0\leq t\leq T}(Y_{t}^{n}-\xi_{t})^-K_{T}^{p}.$$
Moreover, according to the Lipschitz property of $f$ we have
\begin{eqnarray*}\label{e3}
2\int_{t}^{T}Y^{n,p}_{s}(f(s,Y^n_s,Z^n_s)-f(s,Y^p_s,Z^p_s))ds
&\leq& \left(\varrho_{_4}+\frac{2L^2}{\varrho_{_4}}\right)\int_t^T|Y^{n,p}_{s}|^2ds+\frac{2L^2}{\varrho_{_4}}\int_t^T\|Z^{n,p}_{s}\|_{\ell^2}^2ds
\end{eqnarray*}
for some $\varrho_{_4}>0$.
Consequently, by Gronwall's lemma and for $\varrho_{_4}>2L^2$ we get
\begin{eqnarray*}
&&\mathbb{E}|Y^{n,p}_t|^2+\mathbb{E}\int_t^T\|Z^{n,p}_{s}\|_{\ell^2}^2ds\leq \left(\mathbb{E}\left[\sup_{0\leq t\leq T}|(Y_{t}^{n}-\xi_{t})^-|^2\right]\right)^\frac{1}{2}.\left(\mathbb{E}\left[K_{T}^{p}|^2\right]\right)^\frac{1}{2}\xrightarrow[n\to +\infty]{}0.
\end{eqnarray*}
It follows that $(Z^n)_{n\geq0}$ is a Cauchy sequence in the complete space $\mathcal{H}^{2}(\ell^2)$. Then, there exists a process $Z\in\mathcal{H}^{2}(\ell^2)$ such that the sequence $(Z^n)_{n\geq0}$ converges toward $Z$. On the other hand, by applying Burkholder-Davis-Gundy's inequality, one can derive that
$\mathbb{E}\sup_{0\leq t\leq T}|Y_{t}^{n,p}|^{2}\to 0$ as $n,p\to +\infty$. Then
$$\mathbb{E}\left[\sup_{0\leq t\leq T}|Y_{t}^{n}-Y_{t}|^{2}\right]\xrightarrow[n\to +\infty]{}0$$
where $Y$ is regulated and belongs to $\mathcal{S}^2$.
Now, since
$$K^n_t=Y^n_0-Y^n_t-\int_0^tf(s,Y^n_s,Z^n_s)ds-\int_{0}^{t} \widetilde{g}(s)dB_s+\sum_{k=1}^{\infty}\int_0^tZ_{s}^{(k),n}dH_{s}^{(k)},$$
hence $\mathbb{E}\sup_{0\leq t\leq T}|K_{t}^{n,p}|^{2} \to 0$ as $n,p\to +\infty$.
It follows that $(K^n)_{n\geq0}$ is a Cauchy sequence in $\mathcal{S}^2$, then there exists an optional process $K$ limit uniform to $K^n$. Consequently,
$$\mathbb{E}\left[\sup_{0\leq t\leq T}|K_{t}^{n}-K_{t}|^{2}\right]\xrightarrow[n\to +\infty]{}0.$$

~\\
\textbf{Step 4: Conclusion: The limiting process $(Y,Z,K)$ solve the RBDSDEL.}
~\\
The limiting process $(Y,Z,K)$ is the solution of RBDSDEL associated with parameters $(f,\widetilde{g},\xi)$.
Indeed, from step 2, the regulated process $Y$ has the form
\begin{equation}\label{equa33}
Y_t=\xi_T+\int_t^T\overline{f}(s)ds+\int_{t}^{T}\widetilde{g}(s)dB_s+\mathcal{K}_T-\mathcal{K}_t-\sum_{k=1}^{\infty}\int_t^T\mathcal{Z}_{s}^{(k)}dH_{s}^{(k)}\quad \forall t\leq T.
\end{equation}
On the other hand, we have
\begin{eqnarray*}
\mathbb{E}\int_0^T|f(s,Y_s,Z_s)-f(s,Y^n_s,Z^n_{s})|^2ds
&\leq&2L\left(T\mathbb{E}\sup_{0\leq t\leq T}|Y_{t}-Y^n_t|^2+\mathbb{E}\int_0^T\|Z_{s}-Z^n_s\|_{\ell^2}^2ds\right)\xrightarrow[n\to +\infty]{}0
\end{eqnarray*}
thanks to step 3. Then by passing to the limit as $n\to+\infty$ in (\ref{penali}) we get
\begin{equation}\label{equa3}
Y_t=\xi_T+\int_t^Tf(s,Y_s,Z_s)ds+\int_{t}^{T}\widetilde{g}(s)dB_s+K_T-K_t-\sum_{k=1}^{\infty}\int_t^TZ_{s}^{(k)}dH_{s}^{(k)}\quad \forall t\leq T.
\end{equation}
Then by comparing the forward form of (\ref{equa33}) and (\ref{equa3}), we obtain
\begin{equation*}
K_t-\mathcal{K}_t=\sum_{k=1}^{\infty}\int_0^t(Z_{s}^{(k)}-\mathcal{Z}_{s}^{(k)})dH_{s}^{(k)}.
\end{equation*}
Since every predictable martingale of finite variation is constant, we conclude that $\mathcal{K}\equiv K$ and $\mathcal{Z}\equiv Z$.
Now, let us prove that $Y_t\geq \xi_t$ for all $t\leq T$. From step 3, up to a subsequence, $(Y_t^n-\xi_t)^-$ tends to zero $\mathbb{P}$-a.s. for a dense subset of $t$. Hence $Y_t\geq \xi_t$ for a dense subset of $t$. Consequently, $Y_{t+}\geq \xi_{t+}$ for each $t\in[0,T)$. In fact, $Y_{t}\geq \xi_{t}$ for each $t\in[0,T)$. Indeed, if
$\Delta_+\xi_t\geq0$ for some $t\in[0,T)$ then $Y_t=-\Delta_+Y_t+Y_{t+}\geq Y_{t+}\geq \xi_{t+}\geq \xi_{t}$ whereas if $\Delta_+\xi_t<0$ for some $t\in[0,T)$ then $t\in\bigcup_{i}[[\sigma_{n,i}]]$ for sufficiently large $n$, which implies that $\Delta_+K_t^n=(Y^n_{t+}-\xi_t)^-$. Suppose that $Y_t^n\leq \xi_t$ for some $t$. Since $\Delta_+Y_t^n=-\Delta_+K_t^n$, thus we have
$Y^n_{t+}-\xi_t<Y^n_{t+}-Y_t^n=-(Y^n_{t+}-\xi_t)^-$, which leads to a contradiction. Thus $Y^n_t\geq\xi_t$ for each $t\in[0,T)$, and hence $Y_t\geq\xi_t$ for each $t\in[0,T)$. Consequently, $Y_t\geq\xi_t$ for each $t\in[0,T]$.
It remains to show the Skorokhod condition for the regulated process $K$. Since $Y_{t}+\int_{0}^{t} f(s,Y_{s},Z_{s})ds+\int_{0}^{t}\widetilde{g}(s)dB_s$ is a supermartingale of class (D) and by using the convergence result of the sequence $(Y^n,Z^n,K^n)_{n\geq 0}$ we have
$$Y_t=\operatorname*{ess
\,sup}\limits_{\tau\in\mathcal{T}_{[t,T]}}\mathbb{E}\left[\xi_\tau+\int_t^\tau f(s,Y_{s},Z_{s})ds+\int_{0}^{t}\widetilde{g}(s)dB_s|\mathcal{G}_t\right].$$
Denote
$$\eta_t=\xi_t+\int_0^t f(s,Y_{s},Z_{s})ds+\int_{0}^{t}\widetilde{g}(s)dB_s-\mathbb{E}\left[\xi_T+\int_0^T f(s,Y_{s},Z_{s})ds+\int_{0}^{T}\widetilde{g}(s)dB_s|\mathcal{G}_t\right]$$
which is regulated process with $\eta_T=0$ and $\sup_{0\leq t\leq T}|\eta_t|\in\mathrm{L}^2(\Omega)$. Let $\mathbf{Sn}(\eta)$ be its Snell envelope. We have $\mathbf{Sn}(\eta)\in\mathcal{S}^2$ and then $(\mathbf{Sn}(\eta_t))_{t\leq T}$ is of class (D). Henceforth, from Mertens decomposition (Theorem \ref{decom} in Appendix) there exists a unique regulated increasing process $\widetilde{K}$ and a unique local martingale $\widetilde{M}$ such that
$$\mathbf{Sn}(\eta_t)=Y_t-\mathbb{E}\left[\xi_T+\int_t^T f(s,Y_{s},Z_{s})ds+\int_{t}^{T}\widetilde{g}(s)dB_s|\mathcal{G}_t\right]=\widetilde{M}_t-\widetilde{K}_t.$$
Furthermore, by applying the predictable representation (Proposition \ref{prp} in Appendix) to the martingale
$$\mathbb{E}\left[\xi_T+\int_0^T f(s,Y_{s},Z_{s})ds+\int_{0}^{T}\widetilde{g}(s)dB_s|\mathcal{G}_t\right]+\widetilde{M}_t,$$
there exists a unique predictable process $\widetilde{Z}$ such that
$$Y_{t}=\xi_T +\int_{t}^{T} f(s,Y_{s},Z_{s})ds+\int_{t}^{T}\widetilde{g}(s)dB_s+\widetilde{K}_{T}-\widetilde{K}_{t}-\sum_{k=1}^{\infty}\int_t^T\widetilde{Z}_{s}^{(k)}dH_{s}^{(k)}.$$
Thanks to the uniqueness of the solution to RBDSDEL, $\widetilde{Z}\equiv Z$ and $\widetilde{K}\equiv K$. Finally, from Corollary 3.11 in \cite{KRS} we get
\begin{eqnarray*}
\int_0^T(Y_{t-}-\hat{\xi}_{t})dK^{\ast}_t+\sum_{t<T}(Y_{t}-\xi_{t})\Delta_+K_t
&=&\displaystyle\int_0^T(\mathbf{Sn}(\eta_{t-})-\hat{\eta}_{t})dK^{\ast}_t
+\sum_{t<T}(\mathbf{Sn}(\eta_{t})-\eta_{t})\Delta_+K_t=0 \quad \mbox{a.s.}
\end{eqnarray*}
Whence the proof is complete.
\end{proof}

The main result of this section is the following:

\begin{theorem}
Let $(f,g)$ be a pair of Lipschitz drivers and $\xi$ be a completely irregular barrier. Then the RBDSDEL associated with parameters $(f,g,\xi)$ admits a unique solution.
\end{theorem}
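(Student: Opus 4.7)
Uniqueness follows directly from the proposition already proved, so the real task is existence. The plan is a Picard-type fixed point argument that decouples the $g$-nonlinearity from the reflection mechanism, reducing the general case to Lemma~\ref{tt}.

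For any $(U,V)\in\mathcal{B}^2_\beta$ I would set $\widetilde g(s):=g(s,U_s,V_s)$. The Lipschitz property of $g$ together with the integrability of $g(\cdot,0,0)$ and $(U,V)\in\mathcal{B}^2$ guarantee $\mathbb{E}\int_0^T|\widetilde g(s)|^2ds<+\infty$. Lemma~\ref{tt} then yields a unique triple $(Y,Z,K)\in\mathcal{S}^2\times\mathcal{H}^2(\ell^2)\times\mathcal{S}^2$ solving the RBDSDEL with parameters $(f,\widetilde g,\xi)$. Define
\begin{equation*}
\Phi:\mathcal{B}^2_\beta\longrightarrow\mathcal{B}^2_\beta,\qquad \Phi(U,V):=(Y,Z).
\end{equation*}

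The second step is to show that $\Phi$ is a strict contraction on $\mathcal{B}^2_\beta$ provided $\beta$ is large enough. Pick two inputs $(U,V),(U',V')$ with images $(Y,Z),(Y',Z')$ and associated reflection processes $K,K'$. Since both $K$ and $K'$ are minimal with respect to the \emph{same} barrier $\xi$, the Skorokhod condition used in the uniqueness proof still gives
\begin{equation*}
\int_t^T e^{\beta s}\overline Y_{s-}\,d\overline K^{\ast}_s+\sum_{t\leq s<T}e^{\beta s}\overline Y_{s}\Delta_+\overline K_s\leq 0,
\end{equation*}
where $\overline Y=Y-Y'$, $\overline Z=Z-Z'$. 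Applying the Itô-type formula (Proposition~\ref{pro}) to $e^{\beta t}|\overline Y_t|^2$, using the Lipschitz property of $f$ on the cross term $2\overline Y_s(f(s,Y_s,Z_s)-f(s,Y'_s,Z'_s))$ with a Young inequality parameter, and using
\begin{equation*}
|g(s,U_s,V_s)-g(s,U'_s,V'_s)|^2\leq L|U_s-U'_s|^2+\alpha\|V_s-V'_s\|_{\ell^2}^2
\end{equation*}
for the quadratic variation term coming from the backward Kunita-Itô integral, I expect an estimate of the form
\begin{equation*}
\|\overline Y\|_{\mathcal{S}^2_\beta}^2+\|\overline Z\|_{\mathcal{H}^2_\beta(\ell^2)}^2\leq\kappa(\beta)\Bigl(\|U-U'\|_{\mathcal{S}^2_\beta}^2+\|V-V'\|_{\mathcal{H}^2_\beta(\ell^2)}^2\Bigr),
\end{equation*}
with $\kappa(\beta)<1$ when $\beta$ is chosen sufficiently large. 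The Banach fixed point theorem then produces a unique $(Y,Z)\in\mathcal{B}^2_\beta$, and the corresponding $K$ supplied by Lemma~\ref{tt} completes the solution triple of the original RBDSDEL.

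The delicate point, and the main obstacle, is ensuring that $\kappa(\beta)$ can indeed be made strictly less than one. The Itô expansion produces $\mathbb{E}\int_t^T e^{\beta s}|g(s,U_s,V_s)-g(s,U'_s,V'_s)|^2ds$ on the right-hand side, which by the stochastic Lipschitz assumption contributes a coefficient of exactly $\alpha$ in front of $\|V-V'\|^2_{\mathcal{H}^2_\beta(\ell^2)}$. On the other side, the $\|\overline Z\|^2$ term on the left has to absorb a term of order $\frac{L^2}{\varrho}\|\overline Z\|^2$ coming from the $f$-Lipschitz control; choosing $\varrho$ large pushes this contribution below $\frac12$, leaving an effective coefficient strictly below $1$ on $\|V-V'\|^2$ exactly because $\alpha<\tfrac12$. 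Once this balance is secured, an additional application of the forward and backward Burkholder-Davis-Gundy inequalities (as in Step~1 of Lemma~\ref{tt}) upgrades the $\mathbb{E}|\overline Y_t|^2$ control into the essential-supremum $\mathcal{S}^2_\beta$ norm and closes the contraction argument.
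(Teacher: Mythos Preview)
Your strategy is the same as the paper's: freeze $(U,V)$, invoke Lemma~\ref{tt} with $\widetilde g(s)=g(s,U_s,V_s)$, and close by a Banach fixed point. The difference is in the norm you pick for the contraction. The paper does \emph{not} run the fixed point in $\mathcal{B}^2_\beta=\mathcal{S}^2_\beta\times\mathcal{H}^2_\beta(\ell^2)$; it works in $\mathfrak{B}^2_\beta:=\mathcal{H}^2_\beta\times\mathcal{H}^2_\beta(\ell^2)$, i.e.\ with the integral norm $\mathbb{E}\int_0^Te^{\beta s}|Y_s|^2ds$ on the first component. This is exactly the quantity that drops out of the It\^o expansion of $e^{\beta t}|\overline Y_t|^2$, so no Burkholder--Davis--Gundy upgrade is needed and the constants stay clean. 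Concretely, the paper obtains
\[
\Lambda_1\,\mathbb{E}\!\int_0^T\! e^{\beta s}|\overline Y_s|^2ds+\tfrac12\,\mathbb{E}\!\int_0^T\! e^{\beta s}\|\overline Z_s\|_{\ell^2}^2ds
\;\le\;2\alpha\Bigl(\Lambda_1\,\mathbb{E}\!\int_0^T\! e^{\beta s}|\overline U_s|^2ds+\tfrac12\,\mathbb{E}\!\int_0^T\! e^{\beta s}\|\overline V_s\|_{\ell^2}^2ds\Bigr)
\]
with the weight $\Lambda_1=L/(2\alpha)$ and $\beta>\Lambda_1+2L(1+L)$, giving contraction constant $2\alpha<1$.

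Your write-up has a small gap precisely here: you track only the $\alpha$ in front of $\|V-V'\|^2$ and say nothing about the coefficient $L$ multiplying $\mathbb{E}\int e^{\beta s}|U_s-U'_s|^2ds$. That coefficient does not shrink as $\beta\to\infty$, so ``$\kappa(\beta)<1$ for $\beta$ large'' is not automatic; one must rescale the $Y$-component of the norm (the paper's $\Lambda_1$) so that $L$ becomes $2\alpha\Lambda_1$. If you insist on the $\mathcal{S}^2_\beta$ norm, the BDG step you sketch introduces further multiplicative constants in front of both $\|\overline U\|$ and $\|\overline V\|$, and you would still need a weighted-norm trick on top; switching to $\mathcal{H}^2_\beta$ for the first component, as the paper does, avoids all of this.
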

\begin{proof}
Denote $\mathfrak{B}^2_\beta:=\mathcal{H}^{2}_\beta\times\mathcal{H}^{2}_\beta(\ell^2)$ the Banach space endowed with the norm
$$\|(Y,Z)\|^2_{\mathfrak{B}^{2}_\beta}=\mathbb{E}\int_{0}^{T}e^{\beta t}|Y_{t}|^2dt+\mathbb{E}\int_{0}^{T}e^{\beta t}\|Z_{t}\|_{\ell^2}^2dt.$$
Given $(y,z)\in \mathfrak{B}^2_\beta$ and consider the following RBDSDEL
\begin{equation}\label{bsdefp}
Y_{t}=\xi +\int_{t}^{T} f(s,Y_s,Z_s)ds+\int_{t}^{T}g(s,y_s,z_s)dB_{s}+K_T-K_t-\sum_{k=1}^{\infty}\int_t^TZ_{s}^{(k)}dH_{s}^{(k)}.
\end{equation}
By the Lipschitz property on $g$, we have $\mathbb{E}\int_0^T|g(t,y_t,z_t)|^2dt<+\infty$. Then, from Lemma \ref{tt}, the RBDSDEL (\ref{bsdefp}) admits a unique solution.
Next, we define a mapping $\Phi$ from $\mathfrak{B}^2_\beta$ into itself such that for any $(y,z)$ and $(y',z')$ in $\mathfrak{B}^2_\beta$, $\Phi(y,z)=(Y,Z)$ and $\Phi(y',z')=(Y',Z')$ where $(Y,Z,K)$ and $(Y',Z',K')$ are the solutions of the RBDSDEL associated with parameters $(f,g(.,y,z),\xi)$ and $(f,g(.,y',z'),\xi)$ respectively. Set $\bar{\Re}=\Re-\Re'$ for $\Re\in\{Y,Z,K,y,z\}$, and we put $\bar f_t=f(t,Y_t,Z_t)-f(t,Y'_t,Z'_t)$ and $\bar g_t=g(t,y_t,z_t)-g(t,y'_t,z'_t)$ for all $t\leq T$. By applying the Proposition \ref{pro} to $e^{\beta t}|\bar Y_{t}|^2$ we have
\begin{eqnarray*}
&&\mathbb{E}e^{\beta t}|\bar Y_{t}|^2+\beta\mathbb{E}\int_t^Te^{\beta s}|\bar Y_{s}|^2ds+\mathbb{E}\int_t^Te^{\beta s}\|\bar Z_{s}\|_{\ell^2}^2ds\nonumber\\
 &=&2\mathbb{E}\int_{t}^{T}e^{\beta s}\bar Y_{s}\bar f_s ds
 +2\mathbb{E}\int_{t}^{T}e^{\beta s}\bar Y_{s-}d\bar K^{\ast}_s+\mathbb{E}\int_t^Te^{\beta s}|\bar g_s|^2ds\nonumber\\
 &&-\mathbb{E}\sum_{t<s\leq T}e^{\beta s}|\Delta \bar Y_{s}|^2-\mathbb{E}\sum_{t\leq s<T}e^{\beta s}|\Delta_+ \bar Y_{s}|^2
 +2\mathbb{E}\sum_{t\leq s<T}e^{\beta s}\bar Y_{s}\Delta_+\bar K_s.
\end{eqnarray*}
Thanks to the Skorokhod condition on $K$, we have $\int_{t}^{T} e^{\beta s}\bar Y_{s-}d\bar K^{\ast}_s\leq 0$
and
\begin{eqnarray*}
\sum_{t\leq s<T}e^{\beta s}\bar Y_{s}\Delta_+\bar K_s
&=&\sum_{t\leq s<T}e^{\beta s}(Y_{s}-\xi_{s})\Delta_+K_{s}-\sum_{t\leq s<T}e^{\beta s}(Y'_{s}-\xi_{s})\Delta_+K_{s}\\
&&+\sum_{t\leq s<T}e^{\beta s}(Y'_{s}-\xi_{s})\Delta_+K'_{s}-\sum_{t\leq s<T}e^{\beta s}(Y_{s}-\xi_{s})\Delta_+K'_{s}\\
&\leq& 0.
\end{eqnarray*}
Moreover, from the Lipschitz property on $f$ and $g$ we deduce for some $\varrho_{_4}>0$
\begin{eqnarray*}
2\int_{t}^{T}e^{\beta s}\bar Y_{s}\bar f_s ds
&\leq&2\int_{t}^{T}e^{\beta s}L\bar Y_{s}(\bar Y_s+\|\bar Z_s\|_{\ell^2})ds\\
&\leq&2L(1+L)\int_{t}^{T}e^{\beta s}|\bar Y_{s}|^2ds+\frac{1}{2}\int_{t}^{T}e^{\beta s}\|\bar Z_{s}\|_{\ell^2}^2ds
\end{eqnarray*}
and
\begin{equation*}
\int_t^Te^{\beta s}|\bar g_s|^2ds\leq L\int_{t}^{T}e^{\beta s}|\bar y_{s}|^2ds+\alpha\int_{t}^{T}e^{\beta s}\|\bar z_{s}\|_{\ell^2}^2ds.
\end{equation*}
Then
\begin{eqnarray*}
\mathbb{E}\left(\Lambda_1\int_t^Te^{\beta s}|\bar Y_{s}|^2ds+\frac{1}{2}\int_t^Te^{\beta s}\|\bar Z_{s}\|_{\ell^2}^2ds\right)
 &\leq&2\alpha\mathbb{E}\left(\Lambda_1\int_t^Te^{\beta s}|\bar y_{s}|^2ds+\frac{1}{2}\int_t^Te^{\beta s}\|\bar z_{s}\|_{\ell^2}^2ds\right)
\end{eqnarray*}
where $\Lambda_1=\frac{L}{2\alpha}$ and $\beta$ is chosen such that $\beta>\Lambda_1+2L(1+L)$. Consequently, the mapping $\Phi$ is a contraction and then has a unique fixed point $(Y,Z)$ which actually belongs to $\mathfrak{B}^2_\beta$. Moreover, there exists $K\in\mathcal{S}^2$ $(K_0=0)$ such that $(Y,Z,K)$ is the unique solution of the RBDSDEL associated with parameters $(f,g,\xi)$.
\end{proof}

\section{Appendix}
In this section we summarize the principal tools used in our proofs throughout the paper.

\begin{definition}\label{left}
Let $\tau\in\mathcal{T}_{[0,T]}$. An optional process $(\xi_t)_{t\leq T}$ is said to be left upper-semicontinuous along stopping times at the stopping time $\tau$ if for all nondecreasing sequence of stopping times $(\tau_n)_{n\geq 0}$ such that $\tau_n\uparrow\tau$ a.s., $\xi_\tau\geq\limsup_{n\rightarrow+\infty}\xi_{\tau_n}$ a.s. The process $(\xi_t)_{t\leq T}$ is said to be left upper-semicontinuous along stopping times if it is left upper-semicontinuous along stopping times at each $\tau\in\mathcal{T}_{[0,T]}$.
\end{definition}

\begin{proposition}[Predictable representation property of L\'{e}vy processes (Nualart and Schoutens \cite{NS:2000})]\label{prp}
Every random variable $M$ in $\mathbb{L}^2(\Omega,\mathcal{G})$ has a representation of the form
$$M=\mathbb{E}[M]+\sum_{k=1}^{\infty}\int_0^TZ_{s}^{(k)}dH_{s}^{(k)}$$
where $\left\{Z_{s}^{(k)};\;\;k=1,...\infty\right\}$ are predictable.
\end{proposition}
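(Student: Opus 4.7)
\emph{Proof plan.} The statement is the Nualart--Schoutens predictable representation property for Teugels martingales, extended trivially across the independent Brownian component which, in this paper, sits inside the initial $\sigma$--algebra $\mathcal{G}_0 = \mathcal{F}^B_T$. I would follow the standard three--step route.

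\emph{Step 1: Itô isometry and closedness of the image.} The strong orthogonality $\langle H^{(i)}, H^{(j)}\rangle_t = \delta_{ij}\, t$ gives immediately that
$$Z \longmapsto \sum_{k=1}^{\infty} \int_0^T Z^{(k)}_s\, dH^{(k)}_s$$
is an isometry from $\mathcal{H}^{2}(\ell^2)$ into the centred elements of $L^2(\Omega,\mathcal{G})$. In particular, the image $\mathcal{I}$ of this map is a closed subspace of $L^2(\Omega,\mathcal{G})$, and uniqueness of the representing $Z^{(k)}$ is automatic. It therefore suffices to show that $\mathbb{R} + \mathcal{I}$ is dense in $L^2(\Omega,\mathcal{G})$.

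\emph{Step 2: Exponential functionals lie in $\mathbb{R}+\mathcal{I}$.} Under the exponential moment assumption $(ii)$ on $\nu$, the Laplace exponent of $\mathfrak{L}$ is analytic on a neighbourhood of the origin. Hence, for $|c_1|,\dots,|c_n|$ small and $\Phi \in L^2(\Omega,\mathcal{F}^B_T)$ bounded, the random variable
$$\mathcal{E}_T(c,\Phi) \;=\; \Phi \cdot \exp\!\Bigl(\sum_{k=1}^{n} c_k H^{(k)}_T\Bigr)$$
has all moments. Applying Itô's formula with jumps to $\mathcal{E}_t(c,\Phi)$, using that the Teugels orthonormalization $H^{(i)} = \sum_{j\le i}\alpha_{ij} Y^{(j)}$ is invertible so that every power--jump martingale lies in the linear span of $\{H^{(k)}\}$, and iterating, one writes $\mathcal{E}_T(c,\Phi) - \mathbb{E}[\mathcal{E}_T(c,\Phi)\mid \mathcal{G}_0]$ as a convergent series of iterated stochastic integrals against the $H^{(k)}$. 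Each such iterated integral is itself a stochastic integral against a single $H^{(k)}$ with predictable integrand, so $\mathcal{E}_T(c,\Phi) \in \mathbb{R}+\mathcal{I}$ after absorbing the $\mathcal{G}_0$--measurable constant $\mathbb{E}[\mathcal{E}_T(c,\Phi)\mid \mathcal{G}_0]$.

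\emph{Step 3: Totality and conclusion.} The family $\{\mathcal{E}_T(c,\Phi)\}$ is stable under multiplication and its linear span separates points, since the joint Laplace transform of $(H^{(1)}_T,\dots,H^{(n)}_T)$ conditioned on $\mathcal{F}^B_T$ characterises the joint law of $(\mathfrak{L}_s)_{s\le T}$ given $\mathcal{F}^B_T$. A monotone--class / Stone--Weierstrass argument then shows that this family is total in $L^2(\Omega,\mathcal{G})$. Combined with Steps 1 and 2, the closed subspace $\mathbb{R}+\mathcal{I}$ contains a total set, hence equals all of $L^2(\Omega,\mathcal{G})$, giving the desired representation.

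\emph{Main obstacle.} The technically delicate step is Step 2: writing the exponential $\mathcal{E}_T(c)$ explicitly as a convergent chaos--type series in iterated integrals against the $H^{(k)}$. The convergence of this expansion is precisely what the exponential moment hypothesis $(ii)$ is designed to guarantee, and its absence would force one to work directly with Picard--style approximations.
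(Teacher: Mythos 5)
The paper does not prove this proposition: it is quoted verbatim from Nualart and Schoutens \cite{NS:2000}, so there is no internal proof to compare against, and your sketch has to be judged against the argument in that reference. Your Step 1 (isometry from $\langle H^{(i)},H^{(j)}\rangle_t=\delta_{ij}t$, closedness of the image, reduction to a density statement) is correct and is indeed how the cited proof begins. The architecture of Steps 2--3 (exhibit a total family, show it lies in $\mathbb{R}+\mathcal{I}$) is also the right shape. But the specific total family you choose breaks the argument.

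The concrete gap is in Step 2: for $k\geq 2$ the random variable $\exp\bigl(c_k H^{(k)}_T\bigr)$ need not be integrable under hypothesis $(ii)$, let alone have all moments. Indeed $H^{(k)}_T$ contains a positive multiple of $Y^{(k)}_T=\sum_{s\leq T}(\Delta\mathfrak{L}_s)^k - T\,\mathbb{E}[\mathfrak{L}_1^{(k)}]$, and if $\nu$ has two-sided exponential tails (the generic case allowed by $(ii)$, e.g.\ $\nu(dx)\sim e^{-\lambda|x|}dx$ for large $|x|$) then $\int_{|x|>1}e^{c x^{2}}\nu(dx)=+\infty$ for every $c>0$, so $\mathbb{E}\bigl[\exp(c_2 H^{(2)}_T)\bigr]=+\infty$ whenever $c_2\alpha_{22}>0$. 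Your family $\mathcal{E}_T(c,\Phi)$ is therefore not even well defined in $L^2$, and the analyticity-of-the-Laplace-exponent argument for totality collapses with it. Condition $(ii)$ is \emph{not} designed to make such exponentials converge; what it actually buys in Nualart--Schoutens is (a) that all power-jump processes $\mathfrak{L}^{(i)}$ have moments of every order, and (b) that polynomials in $\mathfrak{L}_{t_1},\dots,\mathfrak{L}_{t_n}$ are dense in $L^2(\Omega,\mathcal{F}^L_T)$ (a moment-determinacy argument). The cited proof then represents each monomial in the increments as a \emph{finite} linear combination of iterated integrals against the $Y^{(j)}$ (equivalently the $H^{(j)}$) via It\^{o}'s formula, with no infinite chaos series to control. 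If you insist on exponentials you must take them complex (bounded), but then It\^{o}'s formula produces integrands of the form $e^{icx}-1-icx$ against the compensated jump measure, and re-expanding these in the Teugels basis reintroduces exactly the $L^2(\nu)$-orthogonal-polynomial machinery you were trying to bypass, together with a convergence question your sketch does not address. Two further small points: the "iterated integrals are single stochastic integrals with predictable integrands" reduction in Step 2 is fine, but note also that replacing $\mathbb{E}[M]$ by $\mathbb{E}[M\mid\mathcal{G}_0]$ changes the statement being proved; as literally written (with a scalar $\mathbb{E}[M]$ and no $dB$ integral) the proposition can only be applied, as the paper does, to the purely $\mathfrak{L}$-driven part of a $\mathcal{G}_t$-martingale.
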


\begin{theorem}[Mertens decomposition (cf. Theorem 20 page 429 in \cite{DM2} or page 528 in \cite{Len:1980})]\label{decom}
 Let $\tilde{Y}$ be a strong optional supermartingale of class(D). There exists a unique uniformly integrable locale martingale $M$ and a unique nondecreasing predictable process $K$ (not necessarily right or left continuous) with $K_0=0$ and $\mathbb{E}[K_T]<+\infty$ such that
\begin{equation*}
\tilde{Y}_\tau=M_{\tau}-K_\tau\qquad \forall\tau\in\mathcal{T}_{[0,T]} \; a.s.
\end{equation*}
\end{theorem}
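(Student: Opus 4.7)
The plan is to reduce the theorem to Lemma \ref{tt} by a Picard-type fixed-point argument on a weighted space, exploiting the fact that Lemma \ref{tt} already handles the case where the second driver does not depend on the solution. Concretely, on $\mathfrak{B}^2_\beta := \mathcal{H}^2_\beta \times \mathcal{H}^2_\beta(\ell^2)$ equipped with the norm $\|(y,z)\|^2_{\mathfrak{B}^2_\beta} = \mathbb{E}\int_0^T e^{\beta t}(|y_t|^2 + \|z_t\|_{\ell^2}^2)\,dt$, I would define a map $\Phi \colon \mathfrak{B}^2_\beta \to \mathfrak{B}^2_\beta$ by $\Phi(y,z) = (Y,Z)$, where $(Y,Z,K)$ denotes the unique solution, given by Lemma \ref{tt}, of the RBDSDEL with parameters $\bigl(f,\ \widetilde{g}(\cdot) := g(\cdot,y_\cdot,z_\cdot),\ \xi\bigr)$. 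The Lipschitz assumption on $g$ combined with $(y,z) \in \mathfrak{B}^2_\beta$ ensures $\mathbb{E}\int_0^T|\widetilde g(t)|^2\,dt < +\infty$, so Lemma \ref{tt} applies and $\Phi$ is well-defined.

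The heart of the proof is to show $\Phi$ is a strict contraction for $\beta$ large enough. Given two inputs $(y,z),(y',z')$ and corresponding outputs $(Y,Z,K),(Y',Z',K')$, I would apply the generalized It\^o formula (Proposition \ref{pro}) to $e^{\beta t}|\bar Y_t|^2$ with $\bar Y = Y-Y'$ and similarly $\bar Z,\bar K,\bar y,\bar z$. As in the uniqueness proposition, the Skorokhod condition applied to both $K$ and $K'$ yields
\[
\int_t^T e^{\beta s}\bar Y_{s-}\,d\bar K^\ast_s \leq 0 \quad\text{and}\quad \sum_{t\leq s<T} e^{\beta s}\,\bar Y_s\,\Delta_+\bar K_s \leq 0,
\]
and the jump contributions $-\sum e^{\beta s}|\Delta \bar Y_s|^2 - \sum e^{\beta s}|\Delta_+\bar Y_s|^2$ are nonpositive; so these terms can be discarded. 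Using the Lipschitz property of $f$ with a Young inequality of parameter $\varrho$, the cross term is bounded by $C_L\int_t^T e^{\beta s}|\bar Y_s|^2\,ds + \tfrac{1}{2}\int_t^T e^{\beta s}\|\bar Z_s\|_{\ell^2}^2\,ds$, with the $\tfrac{1}{2}$ factor absorbed into the left-hand side. The quadratic variation of the backward Brownian integral produces $\int_t^T e^{\beta s}|g(s,y_s,z_s)-g(s,y'_s,z'_s)|^2\,ds$, which by the second Lipschitz assumption is controlled by $L\int_t^T e^{\beta s}|\bar y_s|^2\,ds + \alpha\int_t^T e^{\beta s}\|\bar z_s\|_{\ell^2}^2\,ds$.

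Taking expectations, choosing $\varrho$ appropriately and then $\beta$ large enough to absorb the constant in front of $\mathbb{E}\int e^{\beta s}|\bar Y_s|^2\,ds$ into the $\beta$-coefficient on the left, I expect an estimate of the form
\[
\mathbb{E}\!\int_0^T\! e^{\beta s}\bigl(\Lambda_1|\bar Y_s|^2 + \tfrac{1}{2}\|\bar Z_s\|_{\ell^2}^2\bigr)ds \ \leq\ 2\alpha\,\mathbb{E}\!\int_0^T\! e^{\beta s}\bigl(\Lambda_1|\bar y_s|^2 + \tfrac{1}{2}\|\bar z_s\|_{\ell^2}^2\bigr)ds,
\]
for a suitable $\Lambda_1$ depending on $L,\alpha$. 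Since the structural hypothesis $0<\alpha<\tfrac{1}{2}$ gives $2\alpha<1$, this is a strict contraction on $\mathfrak{B}^2_\beta$ and Banach's fixed-point theorem yields a unique $(Y,Z)\in\mathfrak{B}^2_\beta$; the associated $K$ produced by Lemma \ref{tt} at the fixed point lies in $\mathcal{S}^2$ and inherits the Skorokhod condition, giving a solution of the RBDSDEL with parameters $(f,g,\xi)$. Uniqueness in the full class $\mathcal{S}^2\times\mathcal{H}^2(\ell^2)\times\mathcal{S}^2$ is already furnished by the uniqueness proposition at the start of Section \ref{s2}.

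The main obstacle is ensuring that the balance of constants genuinely yields a contraction ratio strictly less than $1$: the Young-type splitting of the $f$-term must leave enough room on the $\bar Z$-side for the $\alpha\|\bar z\|_{\ell^2}^2$ contribution from $g$ to fit within the inequality with coefficient $2\alpha<1$, and the Skorokhod-based cancellation of the reflection terms must be done with care because it involves four separate pairings between the solutions $Y,Y'$ and the increments of $K,K'$ split into continuous and purely right-jumping parts, exactly as in the uniqueness argument above.
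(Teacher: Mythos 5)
Your proposal does not address the statement you were asked to prove. The statement is the Mertens decomposition theorem (Theorem~\ref{decom}): every strong optional supermartingale $\tilde{Y}$ of class (D) admits a unique decomposition $\tilde{Y}_\tau = M_\tau - K_\tau$ with $M$ a uniformly integrable local martingale and $K$ a nondecreasing predictable process, not necessarily right or left continuous. What you have written instead is a Picard/contraction-mapping argument reducing the general RBDSDEL (with $g$ depending on the solution) to Lemma~\ref{tt} — that is, you have sketched a proof of the main existence-and-uniqueness theorem of Section~\ref{s2}, which happens to coincide with the paper's own argument for \emph{that} theorem, but it says nothing whatsoever about optional supermartingales, class (D), or the existence and uniqueness of the pair $(M,K)$ in the decomposition.

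To actually prove the Mertens decomposition you would need an entirely different toolkit: one works with the l\`adl\`ag modification of the optional strong supermartingale, separates the right-continuous part (handled by the classical Doob--Meyer decomposition) from the purely right-jumping part, and constructs $K$ via predictable projections and the potential associated with $\tilde{Y}$; uniqueness then follows because a predictable local martingale of finite variation is constant. None of the ingredients you invoke (Lipschitz estimates on $f$ and $g$, the Skorokhod condition, Young's inequality, the choice of $\beta$) is relevant to that task. In the paper this theorem is quoted without proof from Dellacherie--Meyer \cite{DM2} and Lenglart \cite{Len:1980}, and it is \emph{used} as an input in Step~4 of the proof of Lemma~\ref{tt}, so attempting to derive it from Lemma~\ref{tt} would in any case be circular.
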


\begin{proposition}[It\^{o}'s formula for regulated processes]\label{pro}
Let $Y$ be a semimartingale with regulated trajectories and $F$ be a twice continuously differentiable function on $\mathbb{R}^n$. Then, almost surely, for each $n\in\mathbb{N}$ and all $t\geq0$,
\begin{eqnarray*}
F(Y_{t})
&=&F(Y_{0})
+\sum_{k=1}^n\int_0^tD^k F(Y_{s-})dY^{\ast,k}_s+\frac{1}{2}\sum_{k,l=1}^n\int_0^tD^kD^l F(Y_{s-})d[Y^{\ast,k},Y^{\ast,l}]^{c}_s\\
&&+\sum_{0<s\leq t}\left[F(Y_s)-F(Y_{s-})-\sum_{k=1}^nD^k F(Y_{s-})\Delta Y^k_s\right]+\sum_{0\leq s<t}\left[F(Y_{s+})-F(Y_{s})\right],
\end{eqnarray*}
where $D^k$ denotes the differentiation operator with respect to the $k$-th coordinate, and $[.,.]^{c}$ denotes the continuous part of the quadratic variation of corresponding process.
\end{proposition}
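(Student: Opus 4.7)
The plan is to reduce everything to the classical c\`{a}dl\`{a}g It\^{o} formula by using the canonical decomposition recalled in the preliminaries,
\begin{equation*}
Y_t=Y^{*}_t+J^+_t,\qquad J^+_t:=\sum_{s<t}\Delta_+Y_s,
\end{equation*}
taken componentwise. Because $\sum_{s<T}|\Delta_+Y_s|<+\infty$ a.s., $J^+$ is a left-continuous, finite-variation process whose only discontinuities occur at the right-jump times of $Y$; hence $Y^{*}=Y-J^+$ is a c\`{a}dl\`{a}g semimartingale satisfying $\Delta Y^{*}_s=\Delta Y_s$ and with $[Y^{*,k},Y^{*,l}]^c$ equal to the continuous quadratic-covariation appearing in the statement.

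\textbf{Piecewise c\`{a}dl\`{a}g It\^{o}.} I would enumerate the (at most countable) right-jump times of $Y$ as $\{s_i\}_{i\ge 0}$ and set $c_i:=\sum_{s_j\le s_i}\Delta_+Y_{s_j}$. On every open subinterval $(s_i,s_{i+1})$ one has $J^+\equiv c_i$, so $Y_r=Y^{*}_r+c_i$ and $Y_{r-}=Y^{*}_{r-}+c_i$ throughout. Defining $G_i(y):=F(y+c_i)$, so that $D^kG_i=D^kF(\cdot+c_i)$, I would apply the standard c\`{a}dl\`{a}g It\^{o} formula to $G_i(Y^{*})$ on $(s_i,s_{i+1}]$ and use the translation identities $Y^{*}_{r-}+c_i=Y_{r-}$ and $\Delta Y^{*}_s=\Delta Y_s$ to rewrite the integrands and the left-jump correction directly in terms of $Y$. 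This yields the identity of the statement restricted to that subinterval, with boundary values $F(Y_{s_{i+1}})-F(Y_{s_i+})$ on the left-hand side.

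\textbf{Telescoping and conclusion.} Summing the previous identity over all $i$ with $s_i\le t$ (with the obvious terminal piece up to $t$) and inserting the right-jump increments at each $s_i$ gives
\begin{equation*}
F(Y_t)-F(Y_0)=\sum_i\bigl[F(Y_{s_{i+1}\wedge t})-F(Y_{s_i+})\bigr]+\sum_{i:\,s_i<t}\bigl[F(Y_{s_i+})-F(Y_{s_i})\bigr];
\end{equation*}
the first sum regroups into the two stochastic/quadratic-variation integrals plus the left-jump correction of the statement, while the second sum is exactly the right-jump term $\sum_{0\le s<t}[F(Y_{s+})-F(Y_s)]$.

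\textbf{Main obstacle.} The delicate point is to make this telescoping rigorous when the right-jump times accumulate on $[0,T]$. I would truncate: for each $N\in\mathbb{N}$ replace $J^+$ by $J^{+,N}_t:=\sum_{s<t,\,|\Delta_+Y_s|\ge 1/N}\Delta_+Y_s$, which has only finitely many jumps on $[0,T]$, so that the argument above applies verbatim to $Y^N:=Y^{*}+J^{+,N}$ and the telescoping sum is finite. The passage to the limit $N\to\infty$ then rests on (i) the second-order Taylor bound $|F(y+h)-F(y)-DF(y)\cdot h|\le C|h|^2$ on a compact set (after localization of $Y$ by stopping times and using the $C^2$ regularity of $F$) combined with $\sum_s|\Delta_+Y_s|<+\infty$, which controls both the residual right-jump sum and the extra left-jump corrections produced by truncation; and (ii) standard ucp-continuity of the stochastic integral and of the integral against the continuous quadratic-variation measure to handle the first two terms. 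This essentially adapts the Gal'chouk--Lenglart It\^{o} formula to the l\`{a}dl\`{a}g decomposition recalled from \cite{Ma2}.
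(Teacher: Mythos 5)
The paper does not actually prove Proposition \ref{pro}: it is stated in the Appendix merely as a quoted tool (it is essentially the Gal'chouk--Lenglart It\^{o} formula for l\`{a}dl\`{a}g semimartingales, cf.\ Lenglart's reference already in the bibliography and its use in Klimsiak--Rzymowski--S{\l}omi\'{n}ski), so there is no in-paper argument to compare yours against. On its own merits your proof is correct and follows the standard route: the decomposition $Y=Y^{*}+J^{+}$ with $J^{+}$ left-continuous of finite variation reduces the claim to the classical c\`{a}dl\`{a}g It\^{o} formula applied to the translates $G_i(Y^{*})=F(Y^{*}+c_i)$ between consecutive right-jump times, the boundary bookkeeping $G_i(Y^{*}_{s_i})=F(Y_{s_i+})$ and $Y^{*}_{r-}+c_i=Y_{r-}$ on $(s_i,s_{i+1}]$ is exactly right, and the telescoping reproduces both jump sums in the statement. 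Three points deserve tightening. First, the enumeration of right-jump times as an increasing sequence only makes literal sense for the truncated process $Y^{N}$; you acknowledge this, but the middle section should be phrased for $Y^{N}$ from the outset rather than for $Y$. Second, you should say why $Y^{*}=Y-J^{+}$ is a genuine c\`{a}dl\`{a}g semimartingale (adaptedness of $J^{+}_t=\sum_{s<t}\Delta_+Y_s$ and the fact that subtracting an adapted finite-variation process preserves the semimartingale property); in the Gal'chouk framework this is essentially the definition of a l\`{a}dl\`{a}g semimartingale, but it is worth one line. Third, the localization must control not only $Y$ but also all the intermediate processes $Y^{N}$, which is fine since $|J^{+,N}|\leq\sum_s|\Delta_+Y_s|$, and the dominated-convergence argument for the three limits (stochastic integral, left-jump sum bounded by $C|\Delta Y^{*}_s|^2$ with $\sum_s|\Delta Y^{*}_s|^2\leq[Y^{*}]_t<\infty$, right-jump sum bounded by $C\sum_s|\Delta_+Y_s|$) then goes through exactly as you indicate.
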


In what follows a special comparison theorem for the solutions to BDSDEs without reflection.
\begin{proposition}[BDSDE's comparison theorem]\label{comp}
Let $(Y^{i},Z^{i})$ be a solution of the following BDSDE (associated with parameters $(\xi^i,f^i,g)$)
$$Y^i_{t}=\xi^i +\int_{t}^{T} f^i(s,Y^i_{s},Z^i_{s})ds+\int_{t}^{T} \widetilde{g}(s)dB_s-\sum_{k=1}^{\infty}\int_t^TZ_{s}^{(k),i}dH_{s}^{(k)}$$
where $f^i$ are Lipschitz drivers for $i\in\{1,2\}$, $\mathbb{E}\int_0^T|\widetilde{g}(t)|^2dt<+\infty$ and $\mathbb{E}|\xi|^2<+\infty$.
We suppose that $\xi^1\leq\xi^2$, $f^1(t,y,z) \leq f^2(t,y,z)$ $\forall(t,y,z)\in [0,T]\times\mathbb{R}\times\ell^2$ and
$$ \zeta^k_t=\frac{f^1\left(t,Y^2_t,\tilde Z^{(k-1)}_t\right)-f^1\left(t,Y^2_t,\tilde Z^{(k)}_t\right)}{Z^{(k),1}_t-Z^{(k),2}_t}\mathds{1}_{\{Z^{(k),1}_t-Z^{(k),2}_t\neq 0\}},$$
where
\begin{equation*}
\tilde Z^{(k)}_t=\left(Z^{(1),2}_t,Z^{(2),2}_t,\ldots,Z^{(k),2}_t,Z^{(k+1),1}_t,\ldots,Z^{(d),1}_t\right)
\end{equation*}
 such that
\begin{equation}\label{assum2}
\sum_{k=1}^{\infty}\zeta_{t}^{k}\Delta H_{t}^{(k)}>-1\qquad dt\otimes d\mathbb{P}-\mbox{a.s.}
\end{equation}
 Then $\forall t \leq T$, $Y^1_t \leq Y^2_t$ a.s.
\end{proposition}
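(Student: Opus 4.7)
I would use the classical linearisation plus exponential-weight argument, reducing the comparison $Y^1\le Y^2$ to a positivity statement for a linear BSDE driven by the Teugels martingales. Since both BDSDEs share the same backward integrand $\widetilde g$, the $\int\widetilde g\,dB$ contribution cancels in the difference. Setting $\bar Y = Y^1-Y^2$, $\bar Z = Z^1-Z^2$, $\bar\xi = \xi^1-\xi^2\le 0$, one obtains the purely forward BSDE
\begin{equation*}
\bar Y_t = \bar\xi + \int_t^T \Delta f_s\,ds - \sum_{k=1}^\infty\int_t^T\bar Z_s^{(k)}\,dH_s^{(k)}, \qquad \Delta f_s := f^1(s,Y^1_s,Z^1_s) - f^2(s,Y^2_s,Z^2_s).
\end{equation*}
Decompose $\Delta f_s = \lambda_s\bar Y_s + \sum_k\zeta_s^k\bar Z_s^{(k)} + \delta_s$, where
\begin{equation*}
\lambda_s := \frac{f^1(s,Y^1_s,Z^1_s)-f^1(s,Y^2_s,Z^1_s)}{Y^1_s - Y^2_s}\mathds{1}_{\{Y^1_s\ne Y^2_s\}}, \qquad \delta_s := f^1(s,Y^2_s,Z^2_s) - f^2(s,Y^2_s,Z^2_s).
\end{equation*}
The Lipschitz property in $y$ gives $|\lambda_s|\le L$, the hypothesis $f^1\le f^2$ gives $\delta_s\le 0$, and the coefficients $\zeta_s^k$ from the statement realise the telescoping identity $f^1(s,Y^2_s,Z^1_s) - f^1(s,Y^2_s,Z^2_s) = \sum_k\zeta_s^k\bar Z_s^{(k)}$ because consecutive pairs $\tilde Z_s^{(k-1)},\tilde Z_s^{(k)}$ differ in exactly the $k$-th coordinate by $\bar Z_s^{(k)}$.

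\textbf{Exponential weight and cancellation.} Introduce the Dol\'{e}ans-Dade exponential $\Gamma$ solving
\begin{equation*}
\Gamma_t = 1 + \int_0^t\Gamma_{s-}\lambda_s\,ds + \sum_k\int_0^t\Gamma_{s-}\zeta_s^k\,dH_s^{(k)},
\end{equation*}
which is strictly positive because its jumps $\Gamma_{t-}\sum_k\zeta_t^k\Delta H_t^{(k)}$ exceed $-\Gamma_{t-}$ thanks to assumption (\ref{assum2}). Applying the regulated It\^o formula (Proposition \ref{pro}) to $\Gamma\bar Y$, and using the orthogonality relations $\langle H^{(k)},H^{(j)}\rangle_t = \delta_{kj}\,t$ to evaluate the covariation bracket $[\Gamma,\bar Y]$, the drift generated by the jumps together with the $\bar Y_{s-}\,d\Gamma_s$ contribution cancels exactly the $\lambda_s\bar Y_s + \sum_k\zeta_s^k\bar Z_s^{(k)}$ part of $\bar Y$'s dynamics, leaving
\begin{equation*}
d(\Gamma_t\bar Y_t) = -\Gamma_{t-}\delta_t\,dt + dM_t,
\end{equation*}
for some local $(\mathcal G_t)$-martingale $M$. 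Integrating on $[t,T]$, localising by $\tau_n := \inf\{s\ge t:\Gamma_s+|\bar Y_s|\ge n\}\wedge T$, taking $\mathcal G_t$-conditional expectation and letting $n\to\infty$ yields
\begin{equation*}
\Gamma_t\bar Y_t = \mathbb{E}\!\left[\Gamma_T\bar\xi + \int_t^T\Gamma_{s-}\delta_s\,ds\,\Big|\,\mathcal G_t\right]\le 0,
\end{equation*}
and division by $\Gamma_t>0$ delivers the claim $\bar Y_t\le 0$ a.s.

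\textbf{Main obstacle.} The cancellation computation is essentially mechanical; the delicate step is the integrability required to upgrade $M$ to a true martingale and to pass to the limit in $n$. Assumption (\ref{assum2}) provides strict positivity of $\Gamma$ but not a uniform lower bound on $1+\sum_k\zeta_s^k\Delta H_s^{(k)}$, so $\Gamma$ need not be bounded. My plan is to exploit the pointwise bound $|\zeta_s^k|\le L$ arising from the $\ell^2$-Lipschitz continuity of $f^1$, together with the exponential moment condition~(ii) on the L\'{e}vy measure, to secure an $L^p$ estimate ($p>1$) for $\Gamma$; combined with $\bar Y\in\mathcal S^2$, $(\bar Z,\delta)\in\mathcal H^2(\ell^2)\times\mathcal H^2$ and H\"older's inequality, this supplies the uniform integrability needed to identify the limit with the stated conditional expectation and to close the argument.
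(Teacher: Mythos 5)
Your proposal follows essentially the same route as the paper's proof: linearisation in $y$ and telescoping in $z$ via the $\zeta^k$ coefficients, the Dol\'eans--Dade exponential $\Gamma$ made strictly positive by assumption \eqref{assum2}, It\^o's formula applied to $\Gamma\bar Y$ with the covariation evaluated through $\langle H^{(k)},H^{(j)}\rangle_t=\delta_{kj}t$, and a $\mathcal G_t$-conditional expectation yielding $\Gamma_t\bar Y_t\le 0$. Your concluding remarks on the integrability of $\Gamma$ and the passage from local to true martingale are in fact more careful than the paper, which simply invokes the square integrability of the solutions together with Protter's Theorem 37 and takes the conditional expectation directly.
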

\begin{proof}
Define $\widehat{\Re}=\Re^{1}-\Re^{2}$ for $\Re\in\{Y,Z,\xi\}$. Then the process $(\widehat{Y},\widehat{Z})$ satisfies the following equation

\begin{equation*}
\widehat{Y}_t =\widehat{\xi}+ \int_t^T\left(p_s\widehat{Y}_s+\sum_{k=1}^{\infty}\zeta^k_s\widehat{Z}_{s}^{(k)}+u_s\right)ds
-\sum_{k=1}^{\infty}\int_t^T\widehat{Z}_{s}^{(k)}dH_{s}^{(k)}
\end{equation*}
 where
\begin{itemize}
  \item $ p_t=\displaystyle\frac{f^1(t,Y^1_t,Z^{1}_t)-f^1(t,Y^2_t,Z^{1}_t)}{Y^{1}_t-Y^{2}_t}\mathds{1}_{\{Y^{1}_t-Y^{2}_t\neq 0\}}$;
 \item $u_t=f^1(t,Y^2_t,Z^{2}_t)-f^2(t,Y^2_t,Z^{2}_t)$.
\end{itemize}
Since the solutions of BDSDE are square integrable then thanks to Theorem 37 page 84 in Protter \cite{Pro}, for $0\leq s\leq t\leq T$, the following linear SDE
$$\Gamma_{s,t}=1+\int_s^t\Gamma_{s,r-}d\mathcal{X}_r$$
with
$$\mathcal{X}_t=\int_0^tp_sds+\sum_{k=1}^{\infty}\int_0^t\zeta_{s}^{k}dH_{s}^{(k)}$$
admits a unique solution of the form
\begin{equation*}
\Gamma_{s,t} = \exp\left(\mathcal{X}_t-\mathcal{X}_s\right)\times\prod_{s\leq r\leq t}\left(1+\sum_{k=1}^{\infty}\zeta_{r}^{k}\Delta H_{r}^{(k)}\right)\exp\left(-\sum_{k=1}^{\infty}\zeta_{r}^{k}\Delta H_{r}^{(k)}\right).
\end{equation*}
This solution is strictly positive according to the assumption \eqref{assum2}. Now, by applying It\^{o}'s formula to $\Gamma_{s,t}\widehat{Y}_t$ we get
\begin{eqnarray}\label{equacom}
\Gamma_{s,t}\widehat{Y}_t
&=&\Gamma_{s,T}\widehat{\xi}-\int_t^T\Gamma_{s,r-}d\widehat{Y}_r-\int_t^T\widehat{Y}_rd\Gamma_{s,r}-\int_t^Td[\Gamma,\widehat{Y}]_r\nonumber\\
&=&\Gamma_{s,T}\widehat{\xi}+\int_t^T\Gamma_{s,r}\left(p_r\widehat{Y}_r+\sum_{k=1}^{\infty}\zeta^k_r\widehat{Z}_{r}^{(k)}+u_r\right)dr
-\sum_{k=1}^{\infty}\int_t^T\Gamma_{s,r-}\widehat{Z}_{r}^{(k)}dH_{r}^{(k)}\\
&&-\int_t^T\Gamma_{s,r}p_r\widehat{Y}_rdr
-\sum_{k=1}^{\infty}\int_t^T\Gamma_{s,r-}\widehat{Y}_r\zeta^k_rdH_{r}^{(k)}
-\sum_{k=1}^{\infty}\int_t^T\Gamma_{s,r}\zeta^k_r\widehat{Z}_{r}^{(k)}d[H^{(k)},H^{(k)}]_r.\nonumber
\end{eqnarray}
But,
\begin{eqnarray*}
\mathbb{E}\left(\sum_{k=1}^{\infty}\int_t^T\Gamma_{s,r}\zeta^k_r\widehat{Z}_{r}^{(k)}d[H^{(k)},H^{(k)}]_r|\mathcal{G}_t\right)
&=&\mathbb{E}\left(\sum_{k=1}^{\infty}\int_t^T\Gamma_{s,r}\zeta^k_r\widehat{Z}_{r}^{(k)}d\langle H^{(k)},H^{(k)}\rangle_r|\mathcal{G}_t\right)\\
&=&\mathbb{E}\left(\sum_{k=1}^{\infty}\int_t^T\Gamma_{s,r}\zeta^k_r\widehat{Z}_{r}^{(k)}dr|\mathcal{G}_t\right).
\end{eqnarray*}
Thus, by taking the conditional expectation w.r.t $\mathcal{G}_t$ on both sides of the equality \eqref{equacom} we obtain
\begin{equation*}
\Gamma_{s,t}\widehat{Y}_t=\mathbb{E}\left(\Gamma_{s,T}\widehat{\xi}+\int_t^T\Gamma_{s,r}u_rdr|\mathcal{G}_t\right)\leq0
\end{equation*}
in view of $\Gamma_{s,r}>0$, $\widehat{\xi}\leq 0$ and $u_r\leq 0$. Consequently, $\forall t\leq T$  $Y_t^1 \leq Y_t^2$ a.s.
\end{proof}

\end{document}